\newtheorem{definition}{Definition}
\newtheorem{theorem}{Theorem}
\newtheorem{lemma}[theorem]{Lemma}
\newtheorem{dfn}{Definition}[section]
\newtheorem{exm}[dfn]{Example}
\theoremstyle{definition}
\newcommand{\eaa}[1]{\color{black}#1\normalcolor}
\title{A Robust Numerical Scheme for Solving Riesz-Tempered Fractional Reaction-Diffusion Equations}
\date{\today}
\author{Mohammad Partohaghighi\footnote{Department of Applied Mathematics, University of California Merced, Merced, CA, USA; e-mail: mpartohaghighi@ucmerced.edu}
\hspace*{0.8mm}, Emmanuel Asante-Asamani\footnote{Department of Mathematics, Clarkson University, Potsdam, NY, USA; easantea@clarkson.edu}
\hspace*{0.8mm}, Olaniyi S. Iyiola\footnote{Department of Mathematics, Morgan State University, Baltimore, MD, USA; e-mail:niyi4oau@gmail.com; olaniyi.iyiola@morgan.edu}}
\begin{document}

\maketitle

\begin{abstract}
\noindent The Fractional Diffusion Equation (FDE) is a mathematical model that describes anomalous transport phenomena characterized by non-local and long-range dependencies which deviate from the traditional behavior of diffusion. Solving this equation numerically is challenging due to the need to discretize complicated integral operators which increase the computational costs. These complexities are exacerbated by nonlinear source terms, nonsmooth data and irregular domains. In this study, we propose a second order Exponential Time Differencing Finite Element Method (ETD-RDP-FEM) to efficiently solve nonlinear FDE, posed in irregular domains. This approach discretizes matrix exponentials using a rational function with real and distinct poles, resulting in an L-stable scheme that damps spurious oscillations caused by non-smooth initial data. The method is shown to outperform existing second-order methods for FDEs with a higher accuracy and faster computational time.\\

\noindent {\bf Keywords:} Exponential time differencing; Finite element method; tempered fractional operator; Reaction-diffusion equation.\\

\noindent {\bf 2010 MSC classification:} 

\end{abstract}

\section{Introduction}\label{Sec:Intro}
\noindent Fractional calculus is a developing field in mathematics that explores non-integer order derivatives and integrals. Recently, this area has attracted significant attention due to its broad range of applications in various scientific areas including chemistry, biology, engineering, and finance, among others \cite{fractionalapplications1,fractionalapplications2,fractionalapplications3,fractionalapplications4,fractionalapplications5,fractionalapplications6,fractionalapplications7,iyi1}. One of the main drivers for the study of fractional calculus is its ability to provide models of physical phenomena that are difficult to explain using classical calculus. For example, it has been used to model viscoelastic materials \cite{viscoelasticmaterials} which show time-dependent deformation and stress relaxation as well as to describe particle diffusion in complex media \cite{complexmedia}. Caputo, Riemann-Liouville, Reisz, Grünwald-Letnikov, Tempered fractional derivatives are among the several types of non-integer order derivatives with specific associated applications and/or properties, see \cite{iyi3,iyi4}.\\

\noindent The fractional diffusion equation \eaa{describes the} behavior of particles undergoing anomalous diffusion and/or \eaa{chemical reactions}. This phenomenon is characterized by a non-linear correlation between mean squared displacement and time. Anomalous diffusion is prevalent in various physical, biological, and chemical systems including porous media, polymers, and biological tissues \cite{tissues1,tissues2,tissues3,iyi2}. The tempered fractional diffusion equation (TFDE) adjusts the standard second \eaa{order} spatial derivative into a tempered fractional derivative. This adjustment refines the parameters of random walk models ruled by an exponentially tempered power-law jump distribution. In finance, one typically observes the creation of semi-heavy tails due to the restricted tempered stable probability densities. Tempered fractional time derivatives emerge from waiting times governed by a tempered power law, which have proven significantly valuable in geophysics. When it comes to Brownian motion, the associated tempered fractional derivative or integral, otherwise known as tempered fractional Brownian motion, can exhibit semi-long range dependence \cite{TFDE1}.\\

Numerical \eaa{solution of} TFDE comes with several notable challenges. The first is the so-called curse of non-locality. The characteristic of non-locality in fractional diffusion equations stems from the employment of fractional derivatives. These are an extension of conventional derivatives but to non-integer orders. Consequently, unlike the typical derivatives that depend only on the function's behavior in the immediate vicinity of a specific point, fractional derivatives rely on the function's characteristics across the entire domain. This implies that the value of a fractional derivative at a given point is influenced by the function's behavior everywhere, not just near that particular point. Another layer of complexity comes from the memory effect of the tempered fractional operator, where the future state of the system is dependent on all of its past states. The memory effect implies long-range dependence, whereby states from the distant past can still significantly influence the current state. This feature complicates the truncation method commonly used in numerical solutions to limit the influence of previous states, making the system far more intricate to solve. The third challenge lies in nonlinearity \eaa{in the reaction term}, which \eaa{typically requires costly iterative methods resulting from an implicit discretization of temporal derivatives}. The fourth challenge, non smoothness \eaa{in the initial data}, \eaa{results in spurious oscillations which can blow up the numerical solution if not sufficiently damped. Finally, TFDEs posed in irregular spatial domains are usually difficult to discretize with finite difference schemes.}\\

Several strategies have been implemented to \eaa{solve the} tempered fractional diffusion equation. \eaa{In  \cite{TFDE3} a Forward Euler (FE) method is combined with a weighted and shifted Lubich difference (WSLD) scheme to solve a linear FTDE. The use of an explicit FE method introduces a stability restriction on the time step with can lead to costly simulations. } \eaa{In \cite{TFDE2}} the Crank-Nicolson (CN) method is combined with a Finite Element Method (FEM) \eaa{to resolve the challenge with solving TFDE in irregular domains and remove the stability restriction on the time step}. However, the study did not investigate the nonlinear problem, \eaa{which is likely to be costly since it will require an iterative scheme to evolve the implicit CN method}. \eaa{The finite element method as been applied to solve other fractional boundary value problems involving Riemann-Liouville or Caputo derivative \cite{fem1},  fractional advection-diffusion reaction equations\cite{fem2}, and fractional diffusion-wave equation \cite{fem3}}. \\


\eaa{The family of }Exponential Time Differencing (ETD) schemes \eaa{avoid costly iterative procedures such as Newton's method by utilizing an implicit discretization of the stiff, typically linear, terms in a differential equation (DE) and an explicit treatment of the nonstiff, typically nonlinear, terms to achieve excellent stability properties at a reduced computational cost\cite{Cox2002,kassam2005fourth}. In \cite{asanRDPP1,asanRDPP2} a second order L-stable exponential time differencing (ETD-RDP) method is introduced for non-linear equations of reaction-diffusion type (RDE) and is shown to efficiently damp out spurious oscillations resulting from nonsmooth initial data. The method was initially applied to integer RDE and later extended to fractional RDE in \cite{asanRDPP3,asanRDPP5}.}\\

In this paper, we consider the following tempered fractional diffusion equation with the order $\alpha$
\begin{equation}
\begin{split}
 u_t(x,t)-\partial_{\mid x \mid}^{\alpha ,\lambda}u(x,t)&=f(x,t,u(x,t)), \label{MainEquation}\\ 
\end{split}
\end{equation}
 where $$ \partial_{\mid x \mid}^{\alpha , \lambda}u(x,t)=C_{\alpha}\bigg(\partial_{-x}^{\alpha ,\lambda}u(x,t)+\partial_{+x}^{\alpha,\lambda}u(x,t)\bigg), \ C_{\alpha} =  \frac{1}{2cos(0.5\pi)},$$
with the initial and boundary conditions
\begin{equation}\label{BC}
\begin{split}
u(0,t)&=u(1,t)=0,\\
u(x,0)&=g(x),
\end{split}
\end{equation}
where $\lambda$ is the tempering parameter, the source function here is $f(x,t,u(x,t))$, and the derivatives $\partial_{-x}^{\alpha ,\lambda}u(x,t)$ and $\partial_{+x}^{\alpha,\lambda}u(x,t)$ are defined as follows
\begin{align*}
\partial_{+x}^{\alpha,\lambda}u(x,t)&=D_{+x}^{\alpha , \lambda}u(x,t)-\lambda^{\alpha}u(x,t)-\alpha \lambda^{\alpha -1}D_xu(x,t)\\
\\
\partial_{-x}^{\alpha,\lambda}u(x,t)&=D_{-x}^{\alpha , \lambda}u(x,t)-\lambda^{\alpha}u(x,t)+\alpha \lambda^{\alpha -1}D_xu(x,t),
\end{align*}
\\
also, $D_{+x}^{\alpha , \lambda}u(x,t)$ and $D_{-x}^{\alpha , \lambda}u(x,t)$ denote the left and right tempered fractional derivatives of the function $u(x,t)$, respectively and $D_x$ is the general derivatie operator. We propose to utilize a novel combination of Exponential Time Differencing (ETD) together with Finite Element Method (FEM) to solve the tempered fractional diffusion equation. \eaa{We adopt a method of lines approach and begin with an FEM discretization of the model in space to yield a system of ordinary differential equations which are then solved in time with the ETDRDP scheme.}

\textbf{Our contribution.} This paper introduces an innovative strategy, the Exponential Time Differencing Finite Element Method (ETDRDP-FEM), which unites the Finite Element Method (FEM) and Exponential Time Differencing (ETDRDP) \eaa{scheme to solve the nonlinear TFDE}. This combined approach is designed to solve various \eaa{tempered } fractional models efficiently, encompassing linear, nonlinear, and nonsmooth problems \eaa{in regular or irregular domains}. By amalgamating the strengths of both FEM and ETDRDP, the method we propose aspires to offer superior accuracy and computational efficacy in tackling intricate fractional problems, whilst preserving its versatility across a broad spectrum of situations.\\

\textbf{Outline.}
The organization of this paper is as follows: Section \ref{Sec:Prelims} introduces the essential concepts and primary outcomes of fractional calculus \eaa{necessary for deriving the FEM method}. The numerical discretizations of the fractional diffusion equation, which includes the variational formulation of the primary equation and its corresponding discretizations in \eaa{space and time}, are detailed in Section \ref{Sec:main}. Section \ref{sec:numerics} delves into the numerical experiments performed. Finally, Section \ref{conclude} wraps up the paper with concluding observations.
\section{Preliminaries}\label{Sec:Prelims}
\noindent
This section is devoted to recall some definitions and basic results necessary to derive the variational formulation of the TFDE.
\begin{definition}
For any $\alpha>0$, gamma function, denoted as $\Gamma$, is defined as
$$\Gamma(\alpha) = \int_{0}^{\infty} \eta^{\alpha - 1} e^{-\eta} d\eta,$$
with the property of $\Gamma(\alpha + 1) = \alpha \Gamma(\alpha).$
\end{definition}
\begin{definition}
For $\operatorname{Re}(\alpha) > 0$ and $\operatorname{Re}(\beta) > 0$, Beta function, denoted as $B$, is defined as \cite{Gammafunction}
$$B(\alpha, \beta) = \int_{0}^{1} \eta^{\alpha - 1} (1 - \eta)^{\beta - 1} d\eta.$$ 
\end{definition}
The beta function is closely related to the gamma function, and it can be expressed in terms of gamma function using the formula: 
\begin{equation}
    B(\alpha, \beta) = \frac{\Gamma(\alpha) \Gamma(\beta)}{\Gamma(\alpha + \beta)}. \label{betaproperty}
\end{equation}
\begin{definition}[Riemann-Liouville Integral] \cite{riemannliouvilleintegrallll}
Let $f \in L^1[a,b]$ be a real-valued locally integrable function. Then for $-\infty \leq a < w < b \leq \infty$ and $\alpha>0$, the left-sided Riemann-Liouville (R-L) fractional integrals of a function $f$ is defined as
$$I_{+w}^\alpha f(w)=\frac{1}{\Gamma(\alpha)}\int_{-\infty}^w (w-\eta)^{\alpha-1}
f(\eta)\mathrm{d}\eta{\color{red}.},$$
and right-sided case is defined as 
$$I_{-w}^\alpha f(w)=\frac{1}{\Gamma(\alpha)}\int_w^\infty (w-\eta)^{\alpha -1}f(\eta)\mathrm{d}\eta.$$
The order of the derivative here is $\alpha$.
\end{definition}
\begin{definition}[Riemann-Liouville Derivative, \cite{riemannliouvilleintegrallll}]
Let $f\in L^1[a,b]$ and $n=\lfloor \alpha\rfloor+1$, $\alpha>0$.  The Riemann-Liouville (RL) fractional derivative (left- and right-sided) of order $\alpha$ of the function $f$ are given as
$$D_{+w}^\alpha f(w)=\frac{1}{\Gamma(n-\alpha)}\frac{\mathrm{d}^n}{\mathrm{d}w^n}\int_{-\infty}^w (w-\eta)^{n-\alpha -1}f(\eta)\mathrm{d}\eta,$$
and 
$$D_{-w}^\alpha f(w)=\frac{(-1)^n}{\Gamma(n-\alpha)}\frac{\mathrm{d}^n}{\mathrm{d}w^n}\int_w^\infty (\eta -w)^{n-\alpha -1}f(\eta)\mathrm{d}\eta,$$
\end{definition}
respectively.
\begin{definition}[\eaa{Tempered fractional integrals, \cite{TFDE2}}]
For $1\leq p < \infty$, and for any $f \in L^p(\mathbb{R})$, the left tempered integral is defined as
\begin{equation}
\begin{split}
I_{+w}^{\alpha , \lambda} f(w) =\frac{1}{\Gamma(\alpha)}\int_{-\infty}^w(w-\eta)^{\alpha -1}e^{-\lambda(w-\eta)}f(\eta)\mathrm{d}\eta,\label{1}
\end{split}
\end{equation}
and the right tempered integral is defined as
\begin{equation}
I_{-w}^{\alpha , \lambda}f(w)=\frac{1}{\Gamma(\alpha)}\int_w^\infty (\eta - w)^{\alpha -1}e^{-\lambda(\eta - w)}f(\eta)\mathrm{d}\eta,\label{2}
\end{equation}
where $\alpha >0$ , $\lambda >0$ (the tempering parameter). 
\end{definition}
\begin{definition}[\eaa{Tempered Fractional derivative, \cite{TFDE2}}]
For any $\alpha >0$ and $\lambda >0$, the left fractional tempered derivatives of a function $f: \mathbb{R}\to \mathbb{R}$ with order $\alpha$ is defined as 
\begin{equation}
\begin{split}
D_{+w}^{\alpha , \lambda}f(w):&=e^{-\lambda w}D_{+w}^\alpha e^{\lambda w}f(w)\\
&=\frac{e^{-\lambda w}}{\Gamma(n-\alpha)}\frac{\mathrm{d}^n}{\mathrm{d}w^n}\int_{-\infty}^w (w-\eta)^{n-\alpha -1}e^{\lambda \eta}f(\eta)\mathrm{d}\eta,
\end{split}
\end{equation}
and the right is given as
\begin{equation}
 \begin{split}
D_{-w}^{\alpha , \lambda}f(w):&=e^{\lambda w}D_{-w}^\alpha e^{-\lambda w}f(w)\\
&=\frac{(-1)^ne^{\lambda x}}{\Gamma(n-\alpha)}\frac{\mathrm{d}^n}{\mathrm{d}w^n}\int_w^\infty (\eta -w)^{n-\alpha -1}e^{-\lambda \eta}f(\eta)\mathrm{d}\eta.
\end{split}
\end{equation}
\end{definition}

\begin{lemma}\cite{meer1}\label{adjoint}
Let $\lambda, \alpha >0$ and $f,g\in L^2(\mathbb{R})$. The following identity is true for the left and right-tempered fractional integrals
\begin{equation}
\bigg\langle I_+^{\alpha , \lambda}f,g \bigg\rangle=\bigg\langle f,I_-^{\alpha ,\lambda}g \bigg\rangle, \label{8}
\end{equation}
which indicates that the two integrals are adjoint to each other.
\end{lemma}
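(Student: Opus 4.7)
The plan is to prove the identity directly by unfolding the definitions of the tempered integrals, exchanging the order of integration via Fubini's theorem, and recognizing the resulting inner integral as the right-sided tempered integral acting on $g$. Concretely, I would start from
\begin{equation*}
\bigl\langle I_{+}^{\alpha,\lambda} f, g\bigr\rangle
= \int_{-\infty}^{\infty} \biggl[\frac{1}{\Gamma(\alpha)}\int_{-\infty}^{w}(w-\eta)^{\alpha-1}e^{-\lambda(w-\eta)}f(\eta)\,\mathrm{d}\eta\biggr] g(w)\,\mathrm{d}w,
\end{equation*}
rewrite the region of integration as $\{(\eta,w): \eta < w\}$, swap the order of integration, and collect the inner integral in $w \in (\eta,\infty)$. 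After relabeling the dummy variable, this inner piece is exactly the formula for $I_{-}^{\alpha,\lambda} g(\eta)$ given in definition \originaleqref{2}, yielding $\langle f, I_{-}^{\alpha,\lambda} g\rangle$.

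The only genuine subtlety is the justification of the Fubini exchange, which is where I would expect the main work to lie. To that end, I would observe that $I_{+}^{\alpha,\lambda} f$ is a convolution $K_{\alpha,\lambda} * f$ with the one-sided kernel
\begin{equation*}
K_{\alpha,\lambda}(s) = \frac{1}{\Gamma(\alpha)}\, s^{\alpha-1} e^{-\lambda s}\,\mathbf{1}_{\{s>0\}}.
\end{equation*}
The tempering factor $e^{-\lambda s}$ with $\lambda>0$ forces $K_{\alpha,\lambda}\in L^{1}(\mathbb{R})$ for every $\alpha>0$, since $\int_{0}^{\infty} s^{\alpha-1} e^{-\lambda s}\,\mathrm{d}s = \Gamma(\alpha)/\lambda^{\alpha}$. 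By Young's convolution inequality this gives $I_{+}^{\alpha,\lambda} f \in L^{2}(\mathbb{R})$ for $f\in L^{2}(\mathbb{R})$, so both sides of \originaleqref{8} are well defined and finite for $f,g\in L^{2}(\mathbb{R})$. To legitimize the exchange of integrals I would apply Tonelli to the nonnegative integrand $(w-\eta)^{\alpha-1} e^{-\lambda(w-\eta)}|f(\eta)||g(w)|\mathbf{1}_{\eta<w}$, noting that its total integral equals $\langle K_{\alpha,\lambda}*|f|,\,|g|\rangle$, which is bounded by $\|K_{\alpha,\lambda}\|_{L^{1}}\|f\|_{L^{2}}\|g\|_{L^{2}}<\infty$; absolute integrability then permits Fubini on the signed integrand.

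Once the exchange is justified, the remainder is purely algebraic: the change of viewpoint from $\int_{-\infty}^{w}\!\!\int_{-\infty}^{\infty}$ to $\int_{-\infty}^{\infty}\!\!\int_{\eta}^{\infty}$ produces the factor $(w-\eta)^{\alpha-1}e^{-\lambda(w-\eta)}$ with $w$ now the outer variable ranging above $\eta$, which matches the right tempered integral kernel after the substitution $w\mapsto \eta'$, $\eta\mapsto w'$ (and using $w-\eta = \eta'-w'$). The identity then reads
\begin{equation*}
\bigl\langle I_{+}^{\alpha,\lambda} f, g\bigr\rangle = \int_{-\infty}^{\infty} f(\eta)\, I_{-}^{\alpha,\lambda} g(\eta)\,\mathrm{d}\eta = \bigl\langle f, I_{-}^{\alpha,\lambda} g\bigr\rangle,
\end{equation*}
completing the proof. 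The conceptual takeaway I would record is that the adjointness is a direct consequence of the symmetry of the convolution kernel under reflection combined with the integrability granted by the tempering parameter $\lambda$.
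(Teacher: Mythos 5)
Your proof is correct. Note that the paper itself does not prove Lemma \ref{adjoint}; it simply cites \cite{meer1}, so there is no in-paper argument to compare against. Your route --- unfolding the definitions, recognizing $I_{+}^{\alpha,\lambda}f$ as convolution with the one-sided kernel $K_{\alpha,\lambda}(s)=\Gamma(\alpha)^{-1}s^{\alpha-1}e^{-\lambda s}\mathbf{1}_{\{s>0\}}$, using $\lVert K_{\alpha,\lambda}\rVert_{L^1}=\lambda^{-\alpha}<\infty$ together with Young's inequality and Cauchy--Schwarz to bound $\langle K_{\alpha,\lambda}*|f|,|g|\rangle$, and then invoking Tonelli/Fubini to swap the order of integration --- is the standard duality argument and is complete; the inner $w$-integral over $(\eta,\infty)$ is indeed exactly $I_{-}^{\alpha,\lambda}g(\eta)$ as defined in \eqref{2}. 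Your explicit justification of the integral exchange is precisely the point where the tempering parameter $\lambda>0$ is essential (for $\lambda=0$ the kernel is not integrable and the argument would fail), and it is a worthwhile detail that the paper's citation-only treatment omits. The only cosmetic caveat is that you work with the real inner product $\int fg$; for complex-valued $f,g$ one conjugates $g$ throughout, which changes nothing since the kernel is real.
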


\begin{definition}[\eaa{Tempered fractional derivative spaces, \cite{TFDE2}}]
Let $\alpha, \lambda >0$. $J_{+}^{\alpha, \lambda}$ and  $J_{-}^{\alpha, \lambda}$ denote the left and right fractional tempered derivative spaces respectively which are defined as 
\begin{equation}\label{leftderivativespaces}
J_{\mp}^{\alpha, \lambda}(\mathbb{R})=\{z \in L^2 (\mathbb{R}); D{_{\mp w}^{\alpha ,\lambda}}z \in L^2 (\mathbb{R}) \},
\end{equation}
with the norm
\begin{equation}\label{JR}
\lVert z \lVert_{J_{\mp}^{\alpha, \lambda}(\mathbb{R})}=\lVert D{_{\mp w}^{\alpha ,\lambda}}z \rVert_{L^2(\mathbb{R})}.
\end{equation}
where $J_{\mp}^{\alpha, \lambda} (\mathbb{R})$ denotes the closure of $\mathbb{C}_0^{\infty}(\mathbb{R})$ with respect to $\lVert z \lVert_{J_{\mp}^{\alpha, \lambda}(\mathbb{R})}$.
\end{definition}
\begin{definition}
\cite{TFDE2} The space $H^{\alpha, \lambda}$ for $\alpha, \lambda >0$ is define as follows 
\begin{equation}\label{rightderivativespaces}
H^{\alpha, \lambda}(\mathbb{R})=\{z \in L^2 (\mathbb{R}); (\lambda^2+\omega^2)^{\alpha/2} \hat{z} \in L^2 (\mathbb{R}) \}.
\end{equation}
and the associated norm is given by
\begin{equation}
\lVert z \lVert_{H^{\alpha, \lambda}(\mathbb{R})}=\lVert (\lambda^2+\omega^2)^{\alpha/2} \hat{z} \lVert_{L^2(\mathbb{R})},
\end{equation}
\end{definition}
where $\hat{z}$ represents the Fourier transformation $z$, and the Fourier transformation variable $\omega$. In this case, the closure of $\mathbb{C}_0^{\infty}(\mathbb{R})$ with respect to $\lVert z \lVert_{H^{\alpha, \lambda}(\mathbb{R})}$ is denoted by $H^{\alpha, \lambda} (\mathbb{R})$.
\begin{definition}
\cite{TFDE2} Let $\alpha >0$. Define the spaces $H^{\alpha}$ as follows 
\begin{equation}\label{rightderivativespaces}
H^{\alpha}(\mathbb{R})=\{z \in L^2 (\mathbb{R}); (1+\omega^2)^{\alpha/2} \hat{z} \in L^2 (\mathbb{R}) \}.
\end{equation}
with the norm
\begin{equation}
\lVert z \lVert_{H^{\alpha}(\mathbb{R})} = \lVert (1+\omega^2)^{\alpha/2} \hat{z} \lVert_{L^2(\mathbb{R})}{\color{red}.}
\end{equation}
\end{definition}
\begin{lemma}
\cite{TFDE2} The following hold
\begin{equation}\label{ghghgh}
D_{\mp w}^{\alpha , \lambda} I_{\mp w}^{\alpha , \lambda}f(w) = f(w){\color{red},}
\end{equation}
for $\alpha,\,\lambda > 0$.
\end{lemma}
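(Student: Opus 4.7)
The plan is to reduce the tempered inversion identity to the classical Riemann--Liouville inversion $D_{\pm w}^\alpha I_{\pm w}^\alpha g = g$ by exploiting the conjugation relation that is built into the very definitions of $D_{\pm w}^{\alpha,\lambda}$ and $I_{\pm w}^{\alpha,\lambda}$. I would treat only the left-sided case in detail; the right-sided identity then follows by a symmetric argument after swapping $e^{\lambda w}$ with $e^{-\lambda w}$ and reversing the direction of integration.

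The key observation is that multiplication by $e^{\lambda w}$ turns the tempered integral into the ordinary Riemann--Liouville integral applied to $e^{\lambda w}f$. Indeed, moving the factor $e^{\lambda w}$ under the integral sign in \eqref{1} and combining it with $e^{-\lambda(w-\eta)}$ gives
\begin{equation*}
e^{\lambda w} I_{+w}^{\alpha,\lambda} f(w) = \frac{1}{\Gamma(\alpha)}\int_{-\infty}^{w} (w-\eta)^{\alpha-1} e^{\lambda \eta} f(\eta)\, d\eta = I_{+w}^{\alpha}\bigl[e^{\lambda(\cdot)} f\bigr](w).
\end{equation*}
Substituting this identity into the definition $D_{+w}^{\alpha,\lambda} g(w) = e^{-\lambda w} D_{+w}^{\alpha}\bigl[e^{\lambda w} g(w)\bigr]$ with $g = I_{+w}^{\alpha,\lambda} f$ and then invoking the classical untempered inversion $D_{+w}^{\alpha} I_{+w}^{\alpha} h = h$ with $h = e^{\lambda(\cdot)} f$ collapses the middle factors. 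The outer $e^{-\lambda w}$ cancels the surviving $e^{\lambda w}$, leaving $f(w)$ as required.

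The step that deserves the most attention is justifying the invocation of the classical identity $D_{+w}^\alpha I_{+w}^\alpha h = h$ with $h(w) = e^{\lambda w} f(w)$. For $f \in L^p(\mathbb{R})$ alone, the weighted function $e^{\lambda w} f(w)$ need not be integrable on half-lines and the intermediate quantity $I_{+w}^\alpha h$ need not converge in the classical sense; it is only the damping factor $e^{-\lambda(w-\eta)}$ inside $I_{+w}^{\alpha,\lambda}$ that rescues integrability in \eqref{1}. To address this, I would first establish the chain of equalities pointwise on the dense subclass $\mathbb{C}_0^{\infty}(\mathbb{R})$, where all the manipulations (interchange of differentiation and integration, Fubini, and the classical RL inversion) are routine, and then extend the identity to the full tempered space $J_{+}^{\alpha,\lambda}(\mathbb{R})$ by a density argument using the norm \eqref{JR}, together with the continuity of $I_{+w}^{\alpha,\lambda}$ and $D_{+w}^{\alpha,\lambda}$ between the appropriate tempered spaces introduced earlier.
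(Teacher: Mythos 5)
Your argument is correct. Be aware, though, that the paper gives no proof of this lemma at all --- it is stated with a bare citation to \cite{TFDE2} --- so the comparison here is with the proof that the paper's own toolkit naturally supports rather than with an in-text argument. Your route is the conjugation one: the identity $e^{\lambda w}I_{+w}^{\alpha,\lambda}f(w)=I_{+w}^{\alpha}\bigl[e^{\lambda(\cdot)}f\bigr](w)$, combined with the definition $D_{+w}^{\alpha,\lambda}g=e^{-\lambda w}D_{+w}^{\alpha}(e^{\lambda w}g)$, reduces everything to the classical Riemann--Liouville inversion, and the right-sided case mirrors it after exchanging $e^{\lambda w}$ and $e^{-\lambda w}$; this is sound. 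The route the cited source (and the rest of this paper) leans on is Fourier-analytic: the symbols $\mathcal{F}[I_{\pm x}^{\alpha,\lambda}f]=(\lambda\pm i\omega)^{-\alpha}\hat f$ and $\mathcal{F}[D_{\pm x}^{\alpha,\lambda}g]=(\lambda\pm i\omega)^{\alpha}\hat g$, established later in \eqref{LeftFouriertransformation} and \eqref{fouriertransform}, multiply to $1$, so the composition is the identity on $L^2$ with no integrability caveats. What your approach buys is an elementary, pointwise argument that does not need the Fourier machinery (and does not depend on results proved only after this lemma appears in the text); what it costs is exactly the issue you flag --- the classical inversion theorem is stated for functions in an $L^p$ class that $e^{\lambda(\cdot)}f$ need not belong to --- and your fix (prove the chain of equalities on $\mathbb{C}_0^{\infty}(\mathbb{R})$, then extend by density using the boundedness of $I^{\alpha,\lambda}_{\mp x}:L_2\rightarrow J^{\alpha,\lambda}_{\mp}$ and $D^{\alpha,\lambda}_{\mp x}:J^{\alpha,\lambda}_{\mp}\rightarrow L_2$ from Lemma \ref{2.11}) is the right one. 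One minor quibble: for the left-sided operator the factor $e^{\lambda\eta}$ decays as $\eta\to-\infty$, so the intermediate integral $I_{+w}^{\alpha}\bigl[e^{\lambda(\cdot)}f\bigr]$ converges more readily than your discussion suggests; the genuine obstruction is only that $e^{\lambda(\cdot)}f$ falls outside the hypotheses of the classical inversion theorem, which your density step closes.
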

\begin{lemma}\label{2.11}
\cite{TFDE2} The following mappings
$$I^{\alpha,\lambda}_{\mp x}:L_2(\Omega)\rightarrow L_2(\Omega), \,\, I^{\alpha,\lambda}_{\mp x}:L_2(\Omega)\rightarrow J^{\alpha,\lambda}_{\mp}(\Omega),\,\, \text{and} \,\, D^{\alpha,\lambda}_{\mp x}:J^{\alpha,\lambda}_{\mp}(\Omega)\rightarrow L_2(\Omega)$$ are bounded linear operators for $\lambda,\, \alpha > 0$. 
\end{lemma}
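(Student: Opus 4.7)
The plan is to verify that each of the three operators is linear, which is immediate from their integral and differential definitions, and then to bound them in the appropriate norms. For the first mapping $I^{\alpha,\lambda}_{\mp x}:L_2(\Omega)\to L_2(\Omega)$, I would recognize the tempered integral as a convolution. After the substitution $s=w-\eta$ in \eqref{1}, the left tempered integral reads $(K*f)(w)$ with kernel
$$K(s)=\frac{s^{\alpha-1}e^{-\lambda s}}{\Gamma(\alpha)}\,\chi_{(0,\infty)}(s),$$
where functions in $L_2(\Omega)$ are extended by zero so that Young's convolution inequality on $\mathbb{R}$ applies. A direct computation using the substitution $u=\lambda s$ and the definition of $\Gamma(\alpha)$ yields $\|K\|_{L^1(\mathbb{R})}=\lambda^{-\alpha}$, and Young's inequality gives $\|I^{\alpha,\lambda}_{+x}f\|_{L^2}\le \lambda^{-\alpha}\|f\|_{L^2}$. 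The right-sided case \eqref{2} is symmetric.

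For the second mapping $I^{\alpha,\lambda}_{\mp x}:L_2(\Omega)\to J^{\alpha,\lambda}_{\mp}(\Omega)$, the norm definition \eqref{JR} gives
$$\|I^{\alpha,\lambda}_{\mp x}f\|_{J^{\alpha,\lambda}_{\mp}} = \|D^{\alpha,\lambda}_{\mp x}I^{\alpha,\lambda}_{\mp x}f\|_{L^2},$$
and the inversion identity \eqref{ghghgh} collapses the right-hand side to $\|f\|_{L^2}$. Hence $I^{\alpha,\lambda}_{\mp x}$ is in fact an isometry from $L_2(\Omega)$ into $J^{\alpha,\lambda}_{\mp}(\Omega)$, and in particular bounded with operator norm one. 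The third mapping $D^{\alpha,\lambda}_{\mp x}:J^{\alpha,\lambda}_{\mp}(\Omega)\to L_2(\Omega)$ is then immediate: \eqref{JR} itself states $\|D^{\alpha,\lambda}_{\mp x}z\|_{L^2}=\|z\|_{J^{\alpha,\lambda}_{\mp}}$, so this operator is an isometry as well.

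The only technical subtlety I anticipate is the passage from $\Omega\subsetneq\mathbb{R}$ to $\mathbb{R}$ needed to invoke Young's inequality in the first step; I would handle this by extending integrands by zero outside $\Omega$, consistent with the convention implicit in the tempered derivative definitions on bounded domains. Aside from this bookkeeping, the proof reduces to routine estimates, and the inversion identity \eqref{ghghgh} combined with the very definition of the $J^{\alpha,\lambda}_{\mp}$ norm does essentially all the work for the remaining two mappings.
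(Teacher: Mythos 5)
The paper does not prove this lemma at all: it is quoted verbatim from the cited reference (Celik and Duman), so there is no in-paper argument to compare against. Your proof is correct and self-contained given the definitions as stated. The Young's-inequality step for the first mapping is the standard route, and the computation $\lVert K\rVert_{L^1(\mathbb{R})}=\lambda^{-\alpha}$ is right (note it genuinely uses $\lambda>0$; the tempering is what makes the kernel integrable at infinity, so the boundedness of $I^{\alpha,\lambda}_{\mp x}$ on $L_2$ really does hinge on the exponential factor). The remaining two mappings follow from \eqref{JR} and \eqref{ghghgh} exactly as you say. One small caveat: the paper defines $J^{\alpha,\lambda}_{\mp}$ both as the set of $z$ with $D^{\alpha,\lambda}_{\mp w}z\in L^2$ and as the closure of $C^{\infty}_0$ under the norm \eqref{JR}; to conclude that $I^{\alpha,\lambda}_{\mp x}$ maps \emph{into} the closure (not merely that the image has finite $J$-norm) you would need a short density argument, e.g.\ that $I^{\alpha,\lambda}_{\mp x}$ applied to a sequence converging in $L_2$ yields a Cauchy sequence in the $J$-norm, which your isometry identity supplies immediately. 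Also be aware that in the source reference the $J$-norm includes the $L^2$ part, $\bigl(\lVert z\rVert_{L^2}^2+\lVert D^{\alpha,\lambda}_{\mp w}z\rVert_{L^2}^2\bigr)^{1/2}$; your argument survives that change, since the first mapping's bound then combines with the inversion identity to give $\lVert I^{\alpha,\lambda}_{\mp x}f\rVert_{J}^2\le(\lambda^{-2\alpha}+1)\lVert f\rVert_{L^2}^2$, and the third mapping remains trivially bounded with norm at most one.
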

\begin{lemma}\cite{TFDE2} The following hold
$$I^{\alpha,\lambda}_{\mp w} D^{\alpha,\lambda}_{\mp w} z = z$$
for every $z\in J^{\alpha,\lambda}_{\mp,0}(\Omega)$.
\end{lemma}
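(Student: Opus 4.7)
The plan is to reduce the identity to the classical (non-tempered) Riemann--Liouville fractional fundamental theorem via conjugation by the exponential multiplication operator $M_\mu z(w) := e^{\mu w} z(w)$.

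First, by the very definition of the tempered derivative one has the conjugation identity
$$
D^{\alpha,\lambda}_{\mp w} = M^{-1}\, D^{\alpha}_{\mp w}\, M,
$$
where $M = M_{\lambda}$ for the left-sided case (subscript $+w$) and $M = M_{-\lambda}$ for the right-sided case (subscript $-w$). The analogous identity for the tempered integrals,
$$
I^{\alpha,\lambda}_{\mp w} = M^{-1}\, I^{\alpha}_{\mp w}\, M,
$$
follows from a one-line calculation: in \eqref{1}--\eqref{2} the factor $e^{\pm\lambda w}$ is constant with respect to the integration variable and can be pulled outside, leaving a classical Riemann--Liouville integral acting on $e^{\mp\lambda\eta} f(\eta)$.

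Second, substituting these into the composition of interest telescopes as
\begin{align*}
I^{\alpha,\lambda}_{\mp w}\, D^{\alpha,\lambda}_{\mp w}\, z
&= (M^{-1}\, I^{\alpha}_{\mp w}\, M)(M^{-1}\, D^{\alpha}_{\mp w}\, M)\, z \\
&= M^{-1}\, I^{\alpha}_{\mp w}\, D^{\alpha}_{\mp w}(Mz).
\end{align*}
Provided $Mz$ lies in the classical Riemann--Liouville derivative space $J^{\alpha}_{\mp,0}(\Omega)$, the classical identity $I^{\alpha}_{\mp w} D^{\alpha}_{\mp w}(Mz) = Mz$ applies, and the right-hand side collapses to $M^{-1}(Mz) = z$, yielding the claim for both signs simultaneously.

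The main obstacle is verifying that $M$ really maps $J^{\alpha,\lambda}_{\mp,0}(\Omega)$ into $J^{\alpha}_{\mp,0}(\Omega)$. Since $\Omega$ is bounded, both $M$ and $M^{-1}$ are bounded multipliers on $L^2(\Omega)$; combined with the derivative conjugation identity and the boundedness of $D^{\alpha,\lambda}_{\mp x}$ from Lemma \ref{2.11}, this already ensures $Mz \in L^2(\Omega)$ with $D^{\alpha}_{\mp w}(Mz) \in L^2(\Omega)$. For the closure/boundary-condition component encoded by the subscript $0$, I would exploit that $\mathbb{C}_0^\infty(\Omega)$ is invariant under $M$ and that on this dense core the tempered and classical derivative norms are equivalent (again via the conjugation identity and the uniform bounds on $e^{\pm\lambda w}$ over $\Omega$); a continuity argument then extends the inclusion from the core to the full closure. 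Once this space-matching is secured, the short telescoping calculation above finishes the proof for both the left- and right-sided cases in a single stroke.
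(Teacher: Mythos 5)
The paper does not actually prove this lemma: it is stated verbatim with a citation to \cite{TFDE2}, so there is no in-paper argument to compare yours against. Judged on its own, your proposal is correct and follows the natural (and, in the cited reference, standard) route: the conjugation identities $D^{\alpha,\lambda}_{\pm w}=M^{-1}D^{\alpha}_{\pm w}M$ and $I^{\alpha,\lambda}_{\pm w}=M^{-1}I^{\alpha}_{\pm w}M$ with $M=M_{\pm\lambda}$ follow immediately from the definitions and from pulling $e^{\mp\lambda w}$ out of the integrals in \eqref{1}--\eqref{2}, and the composition then telescopes as you describe. Your handling of the space-matching is also the right one: on a bounded $\Omega$ the multipliers $e^{\pm\lambda w}$ and their inverses are bounded, $\mathbb{C}_0^{\infty}(\Omega)$ is invariant under $M$, and the conjugation identity makes $z\mapsto Mz$ an isomorphism between the tempered and classical closures. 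The only point worth flagging is that your argument ultimately rests on the classical Riemann--Liouville identity $I^{\alpha}_{\mp w}D^{\alpha}_{\mp w}g=g$ on $J^{\alpha}_{\mp,0}(\Omega)$, which you invoke without proof; this is a genuine (if standard) input --- it can fail off the closure of $\mathbb{C}_0^{\infty}$, e.g.\ for kernel elements of $D^{\alpha}_{+w}$ such as $w^{\alpha-1}$ --- so a complete write-up should cite or prove it, exactly as the present paper defers the tempered version to \cite{TFDE2}.
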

\section{Spatial Discretization: Finite Element Method}\label{Sec:main}
The Galerkin finite element method (GFEM) is a widely utilized numerical approach in solving partial differential equations (PDEs) \eaa{in complicated domains. The method begins with a variational formulation of the PDE followed by an approximation of the weak solution in a finite dimensional function space.}\\

For completeness, we give the detailed proofs of some of the following results about tempered fractional calculus. Readers should see \cite{meer1} for further details.
\begin{theorem}\label{semi}
Let $\lambda,\alpha, \beta >0$ and $f\in L^p(\mathbb{R})$. The left-tempered fractional integrations have the following property
\begin{equation}
I_{+x}^{\alpha , \lambda}I_{+x}^{\beta , \lambda}f(x)=I_{+x}^{\alpha + \beta , \lambda}f(x). \label{7}
\end{equation}   
\end{theorem}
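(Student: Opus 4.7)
The plan is to unfold both integrals from their definition in equation \eqref{1}, swap the order of integration via Fubini's theorem, and reduce the resulting inner integral to a Beta function, after which the identity \eqref{betaproperty} finishes the computation. The tempering factor will play a purely passive role: the two exponentials will combine into a single exponential depending only on the outer variables.

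First I would write
\begin{equation*}
I_{+x}^{\alpha,\lambda}I_{+x}^{\beta,\lambda}f(x)=\frac{1}{\Gamma(\alpha)\Gamma(\beta)}\int_{-\infty}^{x}\int_{-\infty}^{\eta}(x-\eta)^{\alpha-1}(\eta-s)^{\beta-1}e^{-\lambda(x-\eta)}e^{-\lambda(\eta-s)}f(s)\,ds\,d\eta,
\end{equation*}
and observe that the two exponentials collapse to $e^{-\lambda(x-s)}$, which is independent of $\eta$. Interchanging the order of integration (justified below) and pulling the $e^{-\lambda(x-s)}f(s)$ factor outside the $\eta$-integral gives
\begin{equation*}
I_{+x}^{\alpha,\lambda}I_{+x}^{\beta,\lambda}f(x)=\frac{1}{\Gamma(\alpha)\Gamma(\beta)}\int_{-\infty}^{x}e^{-\lambda(x-s)}f(s)\left[\int_{s}^{x}(x-\eta)^{\alpha-1}(\eta-s)^{\beta-1}\,d\eta\right]ds.
\end{equation*}

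Next I would evaluate the bracketed kernel integral using the substitution $u=(\eta-s)/(x-s)$, which maps $\eta\in[s,x]$ to $u\in[0,1]$ and turns it into
\begin{equation*}
\int_{s}^{x}(x-\eta)^{\alpha-1}(\eta-s)^{\beta-1}\,d\eta=(x-s)^{\alpha+\beta-1}\int_{0}^{1}(1-u)^{\alpha-1}u^{\beta-1}\,du=(x-s)^{\alpha+\beta-1}B(\alpha,\beta).
\end{equation*}
Applying the Beta--Gamma identity \eqref{betaproperty} to replace $B(\alpha,\beta)$ by $\Gamma(\alpha)\Gamma(\beta)/\Gamma(\alpha+\beta)$ cancels the $\Gamma(\alpha)\Gamma(\beta)$ in the denominator, leaving exactly the definition of $I_{+x}^{\alpha+\beta,\lambda}f(x)$ from \eqref{1}.

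The main technical point to address is the Fubini--Tonelli interchange. For $f\in L^p(\mathbb{R})$, the integrand in absolute value is
\begin{equation*}
(x-\eta)^{\alpha-1}(\eta-s)^{\beta-1}e^{-\lambda(x-s)}|f(s)|,
\end{equation*}
and the exponential tempering together with the boundedness of $I_{+x}^{\beta,\lambda}$ on $L^p(\mathbb{R})$ (Lemma \ref{2.11}) guarantee that the iterated integral of the absolute value is finite, which is enough to invoke Tonelli and then Fubini. I expect this measure-theoretic justification to be the only real obstacle, since the rest is the standard Riemann--Liouville semigroup computation with an extra $e^{-\lambda(\cdot)}$ factor that never obstructs any step.
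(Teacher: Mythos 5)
Your proposal is correct and follows essentially the same route as the paper's own proof: unfold both tempered integrals, interchange the order of integration, substitute $t=s+(x-s)r$ (your $u=(\eta-s)/(x-s)$) so the exponentials collapse to $e^{-\lambda(x-s)}$ and the inner integral becomes $B(\alpha,\beta)$, then apply \eqref{betaproperty}. Your explicit Fubini--Tonelli justification is a welcome addition that the paper omits, but it does not change the argument.
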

\begin{proof}
Applying the left fractional operator  $I_{+x}^{\alpha , \lambda}$ on $I_{+x}^{\beta , \lambda}f(x)$, we have 
\begin{align}\label{proof1}
\begin{aligned}
&I_{+x}^{\alpha , \lambda}I_{+x}^{\beta , \lambda}f(x)=\frac{1}{\Gamma(\alpha)}\int_{-\infty}^x(x-t)^{\alpha -1}e^{-\lambda(x-t)}I_{+x}^{\beta , \lambda}f(t) \mathrm{d}t\\
&=\frac{1}{\Gamma(\alpha)\Gamma(\beta)}\int_{-\infty}^x \int_{-\infty}^t (x-t)^{\alpha -1}e^{-\lambda(x-t)} (t-s)^{\beta -1}e^{-\lambda(t-s)}f(s) \mathrm{d}s  \mathrm{d}t\\
&=\frac{1}{\Gamma(\alpha)\Gamma(\beta)}\int_{-\infty}^t f(s) \int_{s}^x  (x-t)^{\alpha -1} e^{-\lambda(x-t)} (t-s)^{\beta -1}e^{-\lambda(t-s)} \mathrm{d}t  \mathrm{d}s,
\end{aligned}
\end{align}
Next, by changing variables $t = s + (x-s)r$, we have the following
\begin{equation}\label{proof2}
\begin{split}
I_{+x}^{\alpha , \lambda}I_{+x}^{\beta , \lambda}f(x)&= \frac{1}{\Gamma(\alpha)\Gamma(\beta)}\int_{-\infty}^t (x-s)^{\alpha -1} (x-s)^{\beta -1} (x-s) f(s) \\ 
&\times \int_{0}^1   e^{-\lambda(x-s)(1-r)} e^{-\lambda(x-s)r} r^ {\beta - 1}(1-r)^{\alpha - 1}\mathrm{d}r  \mathrm{d}s\\
&= \frac{1}{\Gamma(\alpha)\Gamma(\beta)}\int_{-\infty}^t (x-s)^{\alpha + \beta - 1} e^{-\lambda(x-s)}f(s) \int_{0}^1   r^ {\beta - 1}(1-r)^{\alpha - 1}\mathrm{d}r  \mathrm{d}s.
\end{split}
\end{equation}
The result follows by using Equation \ref{betaproperty}.\\
Similar procedure can be used for the right-tempered fractional integral.
\end{proof}
\begin{theorem}\label{lemma1}
\cite{TFDE1} Let $\lambda, \alpha >0$ and $f\in L^2(\mathbb{R})$. The Fourier transformations of the left and right-tempered fractional integrals are
\begin{equation}\label{LeftFouriertransformation}
\mathcal{F}\bigg[I_{\pm x}^{\alpha , \lambda}f(x)\bigg](\omega)=(\lambda \pm i \omega)^{-\alpha}\hat{f}(x).
\end{equation}
\begin{proof}
 Recall that, for any $f \in L^2(\mathbb{R})$, its Fourier transform and inverse Fourier transform are defined by
\begin{equation}\label{Fouriertransformations1}
\mathcal{F}\bigg[f(x)\bigg](\omega)=\int_{-\infty}^\infty f(x)e^{-i \omega x}\mathrm{d}x,
\end{equation}
and
\begin{equation}\label{Fouriertransformationsinverse}
f(x)=\mathcal{F}^{-1}\bigg[\mathcal{F}[f](\omega)\bigg](x)= \frac{1}{2 \pi} \int_{-\infty}^\infty \mathcal{F}[f](\omega)e^{-i \omega x}\mathrm{d}\omega,
\end{equation}
Using the Heaviside function, $H(x)$, the the left-tempered fractional integral can be reformulated  as
\begin{equation}
 \begin{split}
I_{+x}^{\alpha , \lambda}f(x)&=\frac{1}{\Gamma(\alpha)}\int_{-\infty}^x (x-\xi)^{\alpha -1}H(\xi)e^{-\alpha (x-\xi)}f(x)\mathrm{d}\xi\\
&=(K*f)(x),
\end{split}
\end{equation}
where 
$$K(x):= x^{\alpha-1}e^{-\lambda x} \frac{H(x)}{\Gamma(\alpha)}.$$
{\bf Claim:}
\begin{equation}\label{fourierK}
\mathcal{F}[K](\omega)=(\lambda+i \omega)^{-\alpha}.
\end{equation}
\begin{proof}
Using the following identity
$$\mathcal{L}{(x^{\alpha-1};s)} = \frac{\Gamma(\alpha+1)}{s^{\alpha+1}},\ \ \alpha>0,$$
we have
\begin{eqnarray*}
\mathcal{F}[K](\omega) &=& \frac{1}{\Gamma(\alpha)}\int_{0}^\infty x^{\alpha-1} e^{-\lambda x} e^{-i \omega x}\mathrm{d}x\\
&=&\frac{1}{\Gamma(\alpha)}\int_{0}^\infty x^{\alpha-1} e^{-x(\lambda +i \omega )}\mathrm{d}x\\
&=& (\lambda +i \omega)^{-\alpha}.
\end{eqnarray*}
\end{proof}
Then, using \ref{fourierK}, we get
\begin{eqnarray*}
\mathcal{F}\bigg[I_{\pm x}^{\alpha , \lambda}f(x)\bigg](\omega) &=& \mathcal{F}[K*f](\omega)\\
&=& \mathcal{F}[K](\omega) \mathcal{F}[f](\omega) \\
&=& (\lambda \pm i \omega)^{-\alpha}\mathcal{F}[f](\omega). 
\end{eqnarray*}
\end{proof}
Hence, we obtain
\begin{equation*}\label{LeftFouriertransformation2}
\mathcal{F}\bigg[I_{\pm x}^{\alpha , \lambda}f(x)\bigg](\omega)=(\lambda \pm i \omega)^{-\alpha}\hat{f}(x).
\end{equation*}
Similarly, we have
\begin{equation}\label{fouriertransform}
\mathcal{F}\bigg[D_{\pm}^{\alpha,\lambda}u(x)\bigg](\omega)= \widehat{D_{\pm}^{{\alpha},\lambda}u(x)} = (\lambda\pm i\omega)^\alpha\hat{u}(\omega).
\end{equation}
\end{theorem}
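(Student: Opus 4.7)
The plan is to recognize the tempered fractional integral as a convolution and then apply the convolution theorem for the Fourier transform. Starting from the definition in \eqref{1}, I would rewrite
\[
I_{+x}^{\alpha,\lambda}f(x) = (K_+ * f)(x), \qquad K_+(x) = \frac{x^{\alpha-1} e^{-\lambda x}}{\Gamma(\alpha)} H(x),
\]
where $H$ denotes the Heaviside step function. Because $\lambda > 0$ forces exponential decay at $+\infty$ while $\alpha > 0$ makes the singularity at $0$ integrable, $K_+ \in L^1(\mathbb{R})$, so for $f \in L^2(\mathbb{R})$ the convolution theorem $\widehat{K_+ * f} = \widehat{K_+} \cdot \hat{f}$ applies.

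The heart of the argument is therefore the computation of $\widehat{K_+}$. By direct evaluation,
\[
\widehat{K_+}(\omega) = \frac{1}{\Gamma(\alpha)} \int_0^\infty x^{\alpha-1} e^{-(\lambda + i\omega)x}\, dx,
\]
which, after substituting $u = (\lambda+i\omega)x$ and rotating the contour back to the positive real axis, reduces to the Gamma function identity and gives $\widehat{K_+}(\omega) = (\lambda+i\omega)^{-\alpha}$. Multiplying by $\hat{f}(\omega)$ yields the formula for the left-sided integral. The right-sided case follows the same template with the kernel $K_-(x) = \frac{(-x)^{\alpha-1} e^{\lambda x}}{\Gamma(\alpha)} H(-x)$ supported on $(-\infty,0)$; the sign change inside the exponent produces the factor $(\lambda - i\omega)^{-\alpha}$, and the $\pm$ formula is obtained.

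The main obstacle is the contour rotation in the kernel integral. For $\lambda > 0$, the complex number $\lambda + i\omega$ has argument strictly inside $(-\pi/2,\pi/2)$, so the principal branch of $(\lambda+i\omega)^{-\alpha}$ is unambiguous and the rotation is justified by combining analyticity of $z \mapsto z^{\alpha-1} e^{-z}$ on the relevant sector with a standard arc-vanishing estimate. Once this branch-selection step is secured, everything else reduces to standard Fourier and Gamma function identities and can be carried out routinely.
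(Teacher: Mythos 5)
Your proposal follows essentially the same route as the paper: write $I_{+x}^{\alpha,\lambda}f$ as a convolution with the kernel $K(x)=x^{\alpha-1}e^{-\lambda x}H(x)/\Gamma(\alpha)$, compute $\widehat{K}(\omega)=(\lambda+i\omega)^{-\alpha}$ via the Gamma-function integral, and conclude by the convolution theorem. Your added care with the integrability of the kernel, the contour rotation, and the explicit right-sided kernel only makes more precise the steps the paper carries out (or relegates to ``similarly''), so the argument is correct and matches the paper's proof.
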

\begin{theorem}
\cite{TFDE2} The spaces $J_{\pm}^{\alpha, \lambda}(\mathbb{R})$ and $H^{\alpha, \lambda}(\mathbb{R})$ are equal with equivalent norms.
\begin{proof} 
Using Equation \ref{fouriertransform}, we have
\begin{equation*}
\begin{split}\label{righttemperedfractionalintegrals}
\lVert u \lVert_{J_{\pm}^{\alpha, \lambda}(\mathbb{R})}&= \lVert D{_{\pm x}^{\alpha ,\lambda}}u \lVert_{L_{2}(\mathbb{R})}\\
&= \bigg(\int_\mathbb{R}D{_{+x}^{\alpha ,\lambda}}u \overline{D{_{\mp x}^{\alpha ,\lambda}}u} dx\bigg)^{1/2}\\
&=\bigg(\int_\mathbb{R}\widehat{D{_{\pm x}^{\alpha ,\lambda}}u} \overline{\widehat{D{_{\mp x}^{\alpha ,\lambda}}u}} d\omega \bigg)^{1/2}\\
&= \left(\int_\mathbb{R} (\lambda \pm i \omega )^\alpha \widehat{u(\omega)} (\lambda \mp i \omega )^\alpha \widehat{u(\omega)} d\omega \right) ^{1/2} \\
&= \left(\int_\mathbb{R} (\lambda^2 +  \omega^2 )^\alpha \lvert \widehat{u(\omega)} \rvert^2 d\omega \right) ^{1/2}\\
&= \lVert (\lambda^2 +  \omega^2 )^{\alpha/2} \widehat{u(\omega)}  \lVert_{L^2(\mathbb{R})}\\
&= \lVert u \lVert_{H^{\alpha, \lambda}(\mathbb{R})}.
\end{split}
\end{equation*}
\end{proof}
\end{theorem}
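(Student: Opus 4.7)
The plan is to prove the stronger statement that the two norms actually coincide (not just being equivalent), from which the equality of the spaces follows at once. The core ingredients are Plancherel's theorem together with the Fourier-side representation of the tempered fractional derivative already established in equation \eqref{fouriertransform}.

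First, I would apply Plancherel's identity to rewrite the $J_\pm^{\alpha,\lambda}$ norm on the Fourier side:
\begin{equation*}
\|u\|_{J_\pm^{\alpha,\lambda}(\mathbb{R})}^2 = \|D_{\pm x}^{\alpha,\lambda} u\|_{L^2(\mathbb{R})}^2 = \bigl\|\widehat{D_{\pm x}^{\alpha,\lambda} u}\bigr\|_{L^2(\mathbb{R})}^2.
\end{equation*}
Substituting the identity $\widehat{D_{\pm x}^{\alpha,\lambda} u}(\omega) = (\lambda \pm i\omega)^\alpha \hat{u}(\omega)$ from \eqref{fouriertransform}, the integrand becomes $|(\lambda \pm i\omega)^\alpha|^2 |\hat{u}(\omega)|^2$.

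The key elementary calculation is then $|(\lambda \pm i\omega)^\alpha| = |\lambda \pm i\omega|^\alpha = (\lambda^2+\omega^2)^{\alpha/2}$. This gives
\begin{equation*}
\|u\|_{J_\pm^{\alpha,\lambda}(\mathbb{R})}^2 = \int_\mathbb{R} (\lambda^2+\omega^2)^\alpha |\hat{u}(\omega)|^2 \, d\omega = \bigl\|(\lambda^2+\omega^2)^{\alpha/2}\hat{u}\bigr\|_{L^2(\mathbb{R})}^2 = \|u\|_{H^{\alpha,\lambda}(\mathbb{R})}^2,
\end{equation*}
so the two norms agree on $C_0^\infty(\mathbb{R})$. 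Since both $J_\pm^{\alpha,\lambda}(\mathbb{R})$ and $H^{\alpha,\lambda}(\mathbb{R})$ are defined as the closure of $C_0^\infty(\mathbb{R})$ with respect to the respective norms, agreement of the norms on a dense subset implies that the completions coincide as sets and carry identical norms, which certainly implies equivalence.

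The only real subtlety, and the step most prone to a slip, is the interpretation of $(\lambda \pm i\omega)^\alpha$ for non-integer $\alpha$. One has to fix a branch of the complex power; since $\lambda > 0$, the points $\lambda \pm i\omega$ lie strictly in the open right half-plane for every $\omega \in \mathbb{R}$, and the principal branch of $z^\alpha$ is holomorphic there, so the identity $|z^\alpha| = |z|^\alpha$ is unambiguous and valid. A mild side remark would be that one should also record the Plancherel normalization convention used (a factor of $2\pi$ could appear depending on the convention in \eqref{Fouriertransformations1}), but any such constant affects both norms equally and therefore does not disturb either equality or equivalence.
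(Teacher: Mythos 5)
Your proof is correct and follows essentially the same route as the paper's: Plancherel plus the Fourier symbol $(\lambda \pm i\omega)^\alpha$ from \eqref{fouriertransform}, yielding equality (not merely equivalence) of the norms via $|(\lambda\pm i\omega)^\alpha|^2=(\lambda^2+\omega^2)^\alpha$. Your version is in fact slightly cleaner in writing the integrand as $|\widehat{D_{\pm x}^{\alpha,\lambda}u}|^2$ rather than mixing the $+$ and $-$ derivatives, and your remarks on the branch of the complex power and on passing from $C_0^\infty(\mathbb{R})$ to the closures are worthwhile details the paper leaves implicit.
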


\subsection{Variational Formulation Involving Fractional Operator}
\eaa{We seek a solution } $u \in L^2(0,T:H_0^{\frac{\alpha}{2}}(\Omega))$ such that 
\begin{equation}
\langle u_t ,v \rangle-\langle \partial_{\mid x \mid}^{\alpha , \lambda}u,v \rangle=\langle f,v \rangle,\label{v1}
\end{equation}
for all $v \in L^2(0,T:H_0^{\frac{\alpha}{2}}(\Omega))$.

\eaa{Applying the definition of the tempered derivative} to  \eqref{v1} we obtain
\begin{equation}
\langle u_t,v \rangle-\mathcal{C}_\alpha \bigg \langle D_{+x}^{\alpha,\lambda}+D_{-x}^{\alpha,\lambda}u-2\lambda^\alpha u,v \bigg \rangle=\langle f,v\rangle. \label{v3}
\end{equation}
\eaa{This can be further simplified to}
\begin{equation}
\langle u_t,v \rangle-\mathcal{C}_\alpha\langle D_{+x}^{\alpha,\lambda}u, v \rangle -\mathcal{C}_\alpha \langle D_{-x}^{\alpha,\lambda}u, v \rangle +2\mathcal{C}_\alpha \lambda^\alpha \langle u,v \rangle= \langle f,v \rangle. \label{v4}
\end{equation}
\eaa{By the linearity of the inner product. It remains to apply integration by parts to simplify the inner products involving the tempered derivative. While a discussion of this can be found in \cite{TFDE2}, we have added critical details which we present here for completeness.} 
\eaa{
Using that fact that $D_{+x}^{\alpha , \lambda}u:=e^{-\lambda x}D_{+x}^\alpha e^{\lambda x}u$, we have
\begin{align*}
\langle D_{+x}^{\alpha,\lambda}u, v \rangle = \int_{\Omega} (D_{+x}^{\alpha , \lambda}u) v\mathrm{d}x &= \int_{\Omega} (D_{+x}^\alpha e^{\lambda x}u) e^{-\lambda x} v\mathrm{d}x.
\end{align*}
Using integrating by parts and the fact that 
$$D_{+x}^\alpha e^{\lambda x}u=D^2 I_{+x}^{2-\alpha}(e^{\lambda x}u) \quad \text{and}\quad  v \in L^2(0,T:H_0^{\frac{\alpha}{2}}(\Omega)),$$
we obtain
\begin{align}
\langle D_{+x}^{\alpha,\lambda}u, v \rangle &= \bigg[(e^{-\lambda x} v) I (D_{+x}^\alpha e^{\lambda x}u)\bigg]_{\partial \Omega} - \int_{\Omega} ID_{+x}^\alpha (e^{\lambda x}u) D (e^{-\lambda x} v) \mathrm{d}x  \nonumber \\
& = -\bigg \langle  I D_{+x}^\alpha (e^{\lambda x}u), D(e^{-\lambda x} v)\bigg \rangle \nonumber\\
& = -\bigg \langle I D^2 I_{+x}^{2-\alpha} (e^{\lambda x}u), D (e^{-\lambda x} v)\bigg \rangle \nonumber\\
& = -\bigg \langle D I_{+x}^{2-\alpha} (e^{\lambda x}u), D (e^{-\lambda x} v)\bigg \rangle \nonumber\\
& = -\bigg \langle   I_{+x}^{2-\alpha} D (e^{\lambda x}u), D (e^{-\lambda x} v)\bigg \rangle. \label{v15}
\end{align}}
Using Theorem \ref{semi} and Lemma \ref{adjoint} and the fact that $I_{+x}^{\frac{2-\alpha}{2}} D= D_{+x}^{\frac{\alpha}{2}}$, we have
\begin{equation}\label{v16}
\begin{split}
\bigg \langle D_{+x}^{\alpha,\lambda}u, v\bigg \rangle & = -\bigg \langle  I_{+x}^{2-\alpha} D (e^{\lambda x}u), D (e^{-\lambda x} v)\bigg \rangle \\
& = -\bigg \langle  I_{+x}^{\frac{2-\alpha}{2}} I_{+x}^{\frac{2-\alpha}{2}} D (e^{\lambda x}u), D (e^{-\lambda x} v)\bigg \rangle\\
& = -\bigg \langle   I_{+x}^{\frac{2-\alpha}{2}} D (e^{\lambda x}u),I_{-x}^{\frac{2-\alpha}{2}} D (e^{-\lambda x} v)\bigg \rangle\\
& =-\bigg \langle   D_{+x}^{\frac{\alpha}{2}}(e^{\lambda x}u),D_{-x}^{\frac{\alpha}{2}} (e^{-\lambda x} v)\bigg \rangle \\
& = -\bigg\langle   \mathcal{D}_{+x}^{\frac{\alpha}{2},\lambda} u,\mathcal{D}_{-x}^{\frac{\alpha}{2},\lambda}v\bigg \rangle.
\end{split}
\end{equation}
Similarly, 
\begin{align}\label{v21}
\begin{aligned}
\bigg \langle D_{-x}^{\alpha,\lambda}u, v\bigg \rangle =-\bigg \langle   D_{-x}^{\frac{\alpha}{2},\lambda} u,D_{+x}^{\frac{\alpha}{2},\lambda}v\bigg \rangle .
\end{aligned}
\end{align}
Hence, we have from \eqref{v4} for any $u,v \in H_0^{\frac{\alpha}{2}}(\Omega)$, that
\begin{equation} \label{v21}
\langle u_t,v \rangle-\mathcal{C}_\alpha\bigg \langle D_{+x}^{\frac{\alpha}{2},\lambda} u,D_{-x}^{\frac{\alpha}{2},\lambda}v\bigg \rangle -\mathcal{C}_\alpha \bigg \langle\mathcal{D}_{-x}^{\frac{\alpha}{2},\lambda} u,\mathcal{D}_{+x}^{\frac{\alpha}{2},\lambda}v\bigg \rangle  +2\mathcal{C}_\alpha \lambda^\alpha \langle u,v \rangle=\langle f,v\rangle.
\end{equation}

Then for each $t \in [0,T]$, we can write the variational form of the problem presented in \eqref{MainEquation} as
\begin{equation}\label{v22}
\langle u_t,v \rangle+R \langle u,v \rangle=\langle f , \nu \rangle,\qquad \nu\in H_0^{\frac{\alpha}{2}}(\Omega),
\end{equation}
where $R(u,v)$ (the bilinear form) is defined as
\begin{equation}\label{v24}
R(u,v)=-\mathcal{C}_\alpha\bigg \langle   D_{+x}^{\frac{\alpha}{2},\lambda} u,D_{-x}^{\frac{\alpha}{2},\lambda}v\bigg \rangle -\mathcal{C}_\alpha \bigg \langle   D_{-x}^{\frac{\alpha}{2},\lambda} u,D_{+x}^{\frac{\alpha}{2},\lambda}v\bigg  \rangle +2\mathcal{C}_\alpha \lambda^\alpha \langle u,v \rangle,
\end{equation}
and the initial condition \eqref{BC} is satisfied by $u \in H_0^{\frac{\alpha}{2}}(\Omega)$.\\

Using Equation \ref{fouriertransform} for any $\alpha>0$ and $\lambda >0$, we have
\begin{equation}\label{v25}
\begin{split}
\bigg \langle D_{+x}^{\frac{\alpha}{2},\lambda}u(\omega), D_{-x}^{\frac{\alpha}{2},\lambda}v(\omega)\bigg \rangle&= \int_{\omega} D_{+x}^{\frac{\alpha}{2},\lambda}u(\omega) D_{-x}^{\frac{\alpha}{2},\lambda}v(\omega) \mathrm{d}x \\
&= \int_{-\infty}^{\infty} \widehat{D_{+x}^{\frac{\alpha}{2},\lambda}u(\omega)} \widehat{D_{-x}^{\frac{\alpha}{2},\lambda}v(\omega)} \mathrm{d}x \\
&=\int_{-\infty}^{\infty} \widehat{D_{+x}^{\frac{\alpha}{2},\lambda}u(\omega)} \overline{\widehat{D_{-x}^{\frac{\alpha}{2},\lambda}v(\omega)}} \mathrm{d}x \\
&=\int_{-\infty}^{\infty}(\lambda + i \omega)^{\frac{\alpha}{2}}\hat{u}(\omega)\overline{(\lambda - i \omega)^{\frac{\alpha}{2}}\hat{v}(\omega)}\mathrm{d}\omega.
\end{split}
\end{equation}
Note that
\begin{equation}\label{v26}
\overline{(\lambda - i \omega)^{\frac{\alpha}{2}}}=\left\lbrace
\begin{array}{ll}
\overline{(\lambda + i \omega)^{\frac{\alpha}{2}}}e^{i \theta \alpha},& \omega \geq 0,\\
\overline{(\lambda + i \omega)^{\frac{\alpha}{2}}}e^{-i \theta \alpha},& \omega <0,
\end{array}\right.
\end{equation}
and
\begin{equation}\label{v26}
\overline{(\lambda + i \omega)^{\frac{\alpha}{2}}}=\left\lbrace
\begin{array}{ll}
\overline{(\lambda - i \omega)^{\frac{\alpha}{2}}}e^{-i \theta \alpha},& \omega \geq 0,\\
\overline{(\lambda - i \omega)^{\frac{\alpha}{2}}}e^{i \theta \alpha},& \omega <0.
\end{array}\right.
\end{equation}
Then
\begin{equation}\label{v27}
    \begin{split}
\bigg \langle D_{+x}^{\frac{\alpha}{2},\lambda}u(\omega), D_{-x}^{\frac{\alpha}{2},\lambda}v(\omega)\bigg \rangle&=\int_{-\infty}^0(\lambda + i \omega)^{\frac{\alpha}{2}}\hat{u}(\omega)\ \overline{(\lambda + i \omega)^{\frac{\alpha}{2}}\hat{v}(\omega)}e^{-i \theta \alpha}\mathrm{d}\omega \\
&+\int_0^\infty (\lambda + i \omega)^{\frac{\alpha}{2}}\hat{u}(\omega)\ \overline{(\lambda + i \omega)^{\frac{\alpha}{2}} \hat{v}(\omega)}e^{i\theta \alpha}\mathrm{d}\omega,\\
&=\int_{-\infty}^0(\lambda + i \omega)^{\frac{\alpha}{2}}\hat{u}(\omega)\ \overline{(\lambda + i \omega)^{\frac{\alpha}{2}}} \ \ \overline{ \hat{v}(\omega)}e^{-i \theta \alpha}\mathrm{d}\omega \\
&+\int_0^\infty (\lambda + i \omega)^{\frac{\alpha}{2}}\hat{u}(\omega)\ \overline{(\lambda + i \omega)^{\frac{\alpha}{2}}} \overline{\hat{v}(\omega)}e^{i\theta \alpha}\mathrm{d}\omega.\\
    \end{split}
\end{equation}
Equivalently, we have
\begin{align}\label{v29}
\begin{aligned}
\bigg \langle D_{+x}^{\frac{\alpha}{2},\lambda}u(\omega), D_{-x}^{\frac{\alpha}{2},\lambda}v(\omega)\bigg \rangle&=\int_{-\infty}^0(\lambda + i \omega)^{\frac{\alpha}{2}}\hat{u}(\omega) (\lambda - i \omega)^{\frac{\alpha}{2}} \ \ \overline{ \hat{v}(\omega)}e^{-i \theta \alpha}\mathrm{d}\omega \\
&+\int_0^\infty (\lambda + i \omega)^{\frac{\alpha}{2}}\hat{u}(\omega) (\lambda - i \omega)^{\frac{\alpha}{2}} \overline{\hat{v}(\omega)}e^{i\theta \alpha}\mathrm{d}\omega.\\
\end{aligned}
\end{align}
Hence,
\begin{equation}\label{v30}
\begin{split}
\bigg\langle D_{+x}^{\frac{\alpha}{2},\lambda}u(\omega), D_{-x}^{\frac{\alpha}{2},\lambda}v(\omega)\bigg \rangle&=\int_{-\infty}^0(\lambda^2 +  \omega^2)^{\frac{\alpha}{2}}\hat{u}(\omega) \overline{ \hat{v}(\omega)}e^{-i \theta \alpha}\mathrm{d}\omega \\
&+\int_0^\infty (\lambda^2 +  \omega^2)^{\frac{\alpha}{2}}\hat{u}(\omega) \overline{\hat{u}(\omega)}e^{i\theta \alpha}\mathrm{d}\omega.\\
\end{split}
\end{equation}
Similarly, we have 
\begin{equation}\label{v31}
\begin{split}
\bigg\langle D_{-x}^{\frac{\alpha}{2},\lambda}u(\omega), D_{+x}^{\frac{\alpha}{2},\lambda}v(\omega)\bigg\rangle&=\int_{-\infty}^0(\lambda^2 +  \omega^2)^{\frac{\alpha}{2}}\hat{u}(\omega) \overline{ \hat{v}(\omega)}e^{i \theta \alpha}\mathrm{d}\omega \\
&+\int_0^\infty (\lambda^2 +  \omega^2)^{\frac{\alpha}{2}}\hat{u}(\omega) \overline{\hat{u}(\omega)}e^{-i\theta \alpha}\mathrm{d}\omega.\\
\end{split}
\end{equation}
From \eqref{v30} and \eqref{v31}, we get
\begin{equation}\label{v32}
\begin{split}
\bigg\langle D_{+x}^{\frac{\alpha}{2},\lambda}u(\omega), D_{-x}^{\frac{\alpha}{2},\lambda}v(\omega)\bigg\rangle&+ \bigg\langle D_{-x}^{\frac{\alpha}{2},\lambda}u(\omega), D_{+x}^{\frac{\alpha}{2},\lambda}v(\omega)\bigg\rangle \\ &=\int_{-\infty}^\infty \bigg(\lambda^2 +  \omega^2\bigg)^{\frac{\alpha}{2}}\hat{u}(\omega) \overline{ \hat{v}(\omega)}\bigg(e^{i \theta \alpha}+e^{-i\theta \alpha}\bigg)\mathrm{d}\omega.
\end{split}
\end{equation}
Using 
$$(e^{i \theta \alpha}+e^{-i\theta \alpha})=2cos(\theta \alpha),$$
and Equation \ref{v32}, the relation \eqref{v24} can be written as follows 
\begin{equation}\label{v32}
R(u,v)=-2 \mathcal{C}_\alpha \int_{-\infty}^\infty \bigg(\lambda^2 +  \omega^2\bigg)^{\frac{\alpha}{2}}\hat{u}(\omega) \  \overline{\hat{v}}(\omega) \cos(\theta \alpha)\mathrm{d}\omega  +2\mathcal{C}_\alpha \lambda^\alpha \bigg\langle u(\omega),v(\omega)\bigg\rangle.
\end{equation}
\subsection{Discretization Of Variational Formulation}
Here, we discretize the variational form in \eqref{v22} to obtain a system of ordinary differential equations. Suppose we have the following representations.
\begin{equation}\label{c14}
u(x,t)=\sum_{i=1}^{N-1}u_i(t) \varphi_i (x), \ \  v(x,t)=\sum_{j=1}^{N-1}v_j(t) \varphi_j (x),\ \  f(x,t)=\sum_{i=1}^{N-1}f_i \varphi_i (x). 
\end{equation}
Substituting \eqref{c14} into \eqref{v22}, we obtain
\begin{equation}\label{c16}
\begin{split}
\bigg\langle\sum_{i=1}^{N-1}u'_i(t) \varphi_i (x),\sum_{j=1}^{N-1}v_j(t) \varphi_j (x)\bigg\rangle&+R \bigg ( \sum_{i=1}^{N-1}u_i(t) \varphi_i (x),\sum_{j=1}^{N-1}v_j(t) \varphi_j (x) \bigg )\\
&= \bigg\langle\sum_{i=1}^{N-1}f_i \varphi_i (x),\sum_{j=1}^{N-1}v_j(t) \varphi_j (x)\bigg\rangle.
    \end{split}
\end{equation}
We can write \eqref{c16} as follows
\begin{equation}\label{c17}
\begin{split}
\sum_{j=1}^{N-1}v_j(t)\bigg\langle\sum_{i=1}^{N-1}u'_i(t) \varphi_i (x), \varphi_j (x)\bigg\rangle&+ \sum_{j=1}^{N-1}v_j(t) R \bigg ( \sum_{i=1}^{N-1}u_i(t) \varphi_i (x),\varphi_j (x) \bigg) \\
&= \sum_{j=1}^{N-1}v_j(t)\bigg\langle\sum_{i=1}^{N-1}f_i \varphi_i (x), \varphi_j (x)\bigg\rangle.
    \end{split}
\end{equation}
So, we have
\begin{equation}\label{c18}
\begin{split}
\bigg\langle\sum_{i=1}^{N-1}u'_i(t) \varphi_i (x), \varphi_j (x)\bigg\rangle &+ R\bigg(\sum_{i=1}^{N-1}u_i(t) \varphi_i (x),\varphi_j (x)\bigg)\\
&= \bigg\langle\sum_{i=1}^{N-1}f_i \varphi_i (x), \varphi_j (x)\bigg\rangle, \ \ j=1,...,N-1.
\end{split}
\end{equation}
Then
\begin{equation}\label{c19}
\begin{split}
\sum_{i=1}^{N-1}u'_i(t) \bigg\langle\varphi_i (x), \varphi_j (x)\bigg\rangle &+ \sum_{i=1}^{N-1}u_i(t) R\bigg(\varphi_i (x),\varphi_j (x)\bigg) \\ &
= \sum_{i=1}^{N-1}f_i \bigg\langle\varphi_i (x), \varphi_j (x)\bigg\rangle.
\end{split}
\end{equation}
Consider $(N-1)\times (N-1)$ matrices $P$ and $G$ having entries $$p_{i,j}=\bigg\langle\varphi_j(x),\varphi_i(x)\bigg\rangle,\quad \text{and}\quad g_{i,j}=R \bigg (\varphi_j(x),\varphi_i(x)\bigg).$$
respectively.\\

Also, 
$$U=\bigg[u_1(t), \cdots , u_{N-1}(t)\bigg]^T, \ \ U'=\bigg[u'_1(t), \cdots , u'_{N-1}(t)\bigg]^T,$$ and $F=\bigg[f_1(t), \cdots , f_{N-1}(t)\bigg]^T$, we get, from Equation \eqref{c19}
\begin{equation}\label{c20}
PU'+ GU = PF.
\end{equation}
Taking $ B = P^{-1}G$, we obtained the system of ordinary differential equations as follows
\begin{equation*}
\begin{split}
U'+ BU = F.
\end{split}
\end{equation*}
\section{Time Discretization: ETD-RDP and ETD-CN}
In this section, our goal is to present overview of the time discretization methods adopted to solve the system of ordinary differential equations derived in the previous section
\begin{equation}\label{c21}
U'+ BU = F(t,U).
\end{equation}
The focus is on exponential time differencing with real distinct poles rational approximation to the exponential. For completeness, we also give the Crank-Nicolson method. We should note here that both ETD-RDP and CN schemes are second order accurate schemes.

\subsection{ETD-RDP Scheme}
We give brief steps in constructing the ETD-RDP scheme solving system of ordinary differential equation in \eqref{c21}.  Using a variation of the constants formula on the time interval $[t_m,t_{m+1}]$, we have
\begin{equation}
U(t_{m+1}) = e^{-\tau B}U(t_m) +\int_{t_m}^{t_{m+1}} e^{-B(t_{m+1}-s)}F(s,U(s))ds.
\end{equation}
Taking $s = t_m + z \tau $ with $t_m = m\tau$, $z \in [0, 1]$ and $\tau$ is the time-step, the following recurrence formula is obtained:
\begin{equation}\label{int1}
U(t_{m+1}) = e^{-\tau B}U(t_m) + \tau \int_{0}^{1}e^{-\tau B(1-z)}F(t_m + z\tau,U(t_m+z \tau))dz.
\end{equation}
When dealing with the discretizations of the integral part in \eqref{int1}, many approximations result into different schemes with different properties such as stability and convergence orders. In the case of ETD-RDP scheme, we seek a second-order scheme in which a linear approximation of the nonlinear function is used as follows:
\begin{equation}\label{ffbf}
F(t_m + z \tau, U(t_m + z \tau)) \approx F(t_m, U(t_m)) + z  \tau \frac{F(t_{m+1}, U(t_{m+1})) - F(t_m, U(t_m))}{\tau}.
\end{equation}
Using the above approximation in Equation \eqref{int1}, we obtain a semi-discretized scheme with $U(t_m)\approx U_m$
\begin{eqnarray*}
U_{m+1} &=& e^{-B\tau}U_m + B^{-1}\left[I - e^{-B\tau}\right]F(t_m,U_m) \\
&& + \frac{B^{-2}}{\tau}\left[\tau B - I + e^{-B \tau}\right]\bigg[ F(t_{m+1},U_{m+1})-F(t_m,U_m)\bigg].
\end{eqnarray*}
By employing the constant approximation in \eqref{int1} and integrating, first order accuracy approximation is obtained as
\begin{eqnarray}
U_{m+1}^* = e^{-\tau B}U_m + B^{-1}\left[I - e^{-\tau B}\right]F(t_m,U_m),
\end{eqnarray}
The final semi-discretized scheme is then obtained as:
\begin{eqnarray}
U_{m+1}^* &=& e^{-B \tau}U_n + B^{-1}\left(I - e^{-B \tau}\right)F(t_m,U_m)\\
U_{m+1} &=& e^{-B\tau}U_m + B^{-1}\left( I - e^{-B \tau} \right) F(t_m, U_m) \nonumber \\
&& + \frac{B^{-2}}{\tau}\left[\tau B - I+e^{-B \tau}\right]\bigg[F(t_{n+1},U^*_{m+1})-F(t_m,U_m)\bigg]. 
\end{eqnarray}
The primary computational incentive for approximating the exponential function lies in its efficiency and practicality. This leads us to introduce the RDP rational function as an approximation method \cite{asanRDPP1,asanRDPP5}. This approximation technique is L-acceptable, ensuring a reliable and satisfactory outcome  \cite{asanRDPP1,asanRDPP5}. Consider the following rational function approximation of the exponential function.
\begin{equation*}
r(z) = \frac{1+\frac{5}{12}z}{(1-\frac{1}{4}z)(1-\frac{1}{3}z)} =\frac{9}{1-\frac{1}{3}z} -\frac{8}{1-\frac{1}{4}z}\approx e^z.
\end{equation*}
The function $r(z)$  is second order accurate. Assume $r(z)$ is defined and bounded on the spectrum of $\tau B$, then
\begin{eqnarray}\label{rdp}
e^{-\tau B}\approx r(-\tau B) &=& \left[I-\frac{5}{12}\tau B\right]\left[I+\frac{1}{4}\tau B\right]^{-1} \left[I+\frac{1}{3}\tau B\right]^{-1}\nonumber\\
&=& 9\left[I+\frac{1}{3}\tau B\right]^{-1} - 8\left[I+\frac{1}{4}\tau B\right]^{-1}.
\end{eqnarray}
One major advantage of the above approximation is the fact that it can be decomposed into partial fraction, allowing basic algebra to simplify the scheme and also permit parallel implementation of the final scheme. Consider the following partial fraction decomposition
\begin{eqnarray}\label{appr1}
\left[I +\frac{B\tau }{4}\right]^{-1}\left[I+\frac{B\tau }{3}\right]^{-1} &\approx & 4\left[I+\frac{1}{3}\tau B\right]^{-1} - 3\left[I+\frac{1}{4}\tau B\right]^{-1} \\ \label{appr2}
\left[I+\frac{1}{6}\tau B\right]\left[I+\frac{1}{4}\tau B\right]^{-1} \left[I+\frac{1}{3}\tau B\right]^{-1} &\approx & 2\left[I+\frac{1}{3}\tau B\right]^{-1} - \left[I+\frac{1}{4}\tau B\right]^{-1}.
\end{eqnarray}
Combining Equations \eqref{rdp}, \eqref{appr1} and \eqref{appr2}, we have the final ETD-RDP scheme as
\begin{eqnarray}
W_{m+1}^* &=& \left[I + B \tau\right]^{-1}\left[W_n + \tau F(t_m,W_m)\right],\\
W_{m+1} &=& \left[I + \frac{1}{3}B \tau\right]^{-1} \bigg(9W_m + 2\tau F(t_m,W_m) + \tau F(t_{m+1},W^*_{m+1})\bigg] \nonumber \\
&& +  \left[I + \frac{1}{4}B \tau\right]^{-1} \bigg[-8W_m - \frac{3 \tau}{2} F(t_m,W_m) - \frac{\tau}{2} F(t_{m+1},W^*_{m+1})\bigg]. 
\end{eqnarray}
\subsection{Crank-Nicolson Scheme}   
We present overview of Crank-Nicolson scheme here which is used for comparing purposes in the implementation section. Consider the same system of ordinary differential equations in \eqref{c14} and the following approximations:
$$\bar{\partial}U_m=\frac{U_m - U_{m-1}}{\tau}, \ \ \tilde{U}_m=\frac{U_m + U_{m-1}}{2},  \ \ F^{m-\frac{1}{2}}=\frac{1}{2}\bigg[F(U_m,t_m)+F(U_{m+1},t_{m+1})\bigg].$$
Hence, the Crank-Nicolson final scheme is given as
\begin{equation}\label{c25}
\bigg[I + \frac{1}{2}B\tau\bigg] W_m  =  \bigg[I - \frac{1}{2}B\tau\bigg] W_{m-1} + F^{m-\frac{1}{2}},
\end{equation}
where Newton's method is used to handle the nonlinear function.
\section{Numerical Examples and Discussion}\label{sec:numerics}
This section is focused on the implementation of the proposed scheme ETD-RDP together with Finite Element Method (ETD-RDP-FEM) for solving fractional reaction diffusion equations. The impact of the fractional order in the problems considered on the efficacy of the scheme is further investigated. The order of convergence is calculated using the formula \cite{convergenc1}
$$p \approx \frac{\log\left(\frac{E(\tau)}{E(\tau/2)}\right)}{\log(2)},  \ \tau = \frac{T}{M}
,$$
where $M$ denotes the number of discretization points in the temporal direction and error $E(\tau)$ is given by:
\begin{equation*}
E(\tau) = \frac{\lVert u - \tilde{u} \rVert_{\infty}}{\lVert u \rVert_{\infty}},
\end{equation*}
where $u$ represents the exact solution and $\tilde{u}$ represents the approximate solution.
\subsection{Numerical Examples}
\begin{exm}[{\bf Non-homogeneous Linear Model}]\label{Example1}$\,$\\
Consider the following non-homogeneous linear model of Riesz-tempered fraactional order derivative:
\begin{equation}
\begin{split}
u_t(x,t)-\partial_{\mid x \mid}^{\alpha ,\lambda}u(x,t)&=f(x,t),  \ (x, t) \in [0, 1] \times (0, 1],\\
u(0,t)&=u(1,t)=0, \\
u(x,0)&=g(x),
\label{meample1}
    \end{split}
\end{equation}
with the exact solution 
$$u(x, t) = -2 \lambda^{6 - \alpha} \cos \left(\frac{\pi \alpha}{2}\right) (-\alpha) e^{-t} x^3 (1 - x)^3,$$
and function $f(x,t)$ is given by:
\begin{eqnarray}\label{source_term}\nonumber
f(x,t) &=& 2\lambda^{6-\alpha}\cos\left(\frac{\pi\alpha}{2}\right)\Gamma(-\alpha)e^{-t}x^3(1-x)^3 \\ \nonumber
&& + e^{-t} \left( g_1(x) + \lambda \left[g_2(x) + g_3(x) + g_4(x) + g_5(x) + g_6(x) + g_7(x)\right] + g_8(x) + g_9(x)\right).\\
\end{eqnarray}

where
\begin{eqnarray*}
g_1(x) &=& 3\lambda^5(x-1)^2x^2(2x-1)\gamma(1-\alpha, x\lambda)\\
g_2(x) &=& -\lambda^5x^6\gamma(-\alpha, x\lambda) + 3\lambda^5x^5\gamma(-\alpha, x\lambda) - 3\lambda^5x^4\gamma(-\alpha, x\lambda) - 15\lambda^3x^4\gamma(2-\alpha, \lambda-x\lambda)\\
&& + \lambda^5x^3\gamma(-\alpha, x\lambda) - \lambda^5(x-1)^3x^3\gamma(-\alpha, \lambda-x\lambda) + 30\lambda^3x^3\gamma(2-\alpha, \lambda-x\lambda)\\
g_3(x) &=& + 20\lambda^2x^3\gamma(3-\alpha, x\lambda) - 20\lambda^2x^3\gamma(3-\alpha, \lambda-x\lambda) - 30\lambda^2x^2\gamma(3-\alpha, x\lambda)\\
&& - 3\lambda^4(x-1)^2(2x-1)x^2\gamma(1-\alpha, \lambda-x\lambda) - 18\lambda^3x^2\gamma(2-\alpha, \lambda-x\lambda)\\
&& + 30\lambda^2x^2\gamma(3-\alpha, \lambda-x\lambda)\\
g_5(x) &=& - 15\lambda x^2\gamma(4-\alpha, x\lambda) - 3\lambda^3(x-1)(5(x-1)x+1)x\gamma(2-\alpha, x\lambda) \\
&& - 15\lambda x^2\gamma(4-\alpha, \lambda-x\lambda) \\
g_6(x) &=& + 3\lambda^3x\gamma(2-\alpha, \lambda-x\lambda) + 12\lambda^2x\gamma(3-\alpha, x\lambda) - 12\lambda^2x\gamma(3-\alpha, \lambda-x\lambda)\\
&& - \lambda^2\gamma(3-\alpha, x\lambda) + \lambda^2\gamma(3-\alpha, \lambda-x\lambda) + 15\lambda x\gamma(4-\alpha, x\lambda)\\
&& + 15\lambda x\gamma(4-\alpha, \lambda-x\lambda)\\
g_7(x) &=& + 6x\gamma(5-\alpha, x\lambda) - 6x\gamma(5-\alpha, \lambda-x\lambda) - 3\lambda\gamma(4-\alpha, x\lambda) - 3\lambda\gamma(4-\alpha, \lambda-x\lambda) \\
&& - 3\gamma(5-\alpha, x\lambda) + 3\gamma(5-\alpha, \lambda-x\lambda)\\
g_8(x) &=& +2\Gamma(2-\alpha) \bigg( \alpha^4 - 14\alpha^3 + 71\alpha^2 - 154\alpha + 3(\alpha - 3)(\alpha - 2)\lambda^2(5(x - 1)x + 1)\\
&&+ 3\lambda^4(x - 1)x(5(x - 1)x + 1) + 120 \bigg)\\
g_9(x) &=& - \gamma(6-\alpha, x\lambda) - \gamma(6-\alpha, \lambda-x\lambda),
\end{eqnarray*}
and the incomplete Gamma function, denoted as $\gamma(z, x)$, is defined as:
\begin{equation*}
\gamma(z, x) = \int_0^x t^{z-1}e^{-t} dt.
\end{equation*}
As a proof of concept, we first introduce a test case example where the exact solution is known. Although this is a linear model, the fractional order in space makes it a difficult problem to solve due to nonlocal property effect.

\begin{table}[H]
\centering
\caption{Example \eqref{Example1}: Time rate of convergence and $L_\infty$-norm of ETD-RDP-FEM and CN-FEM with different values of $\alpha$ at fixed $t = 1$.}
\begin{tabular}{ccccccc}
\toprule
\multicolumn{1}{c}{} & \multicolumn{1}{c}{} & \multicolumn{1}{c}{} & \multicolumn{2}{c}{\textbf{Norm}} & \multicolumn{2}{c}{\textbf{Order}} \\
\cmidrule(rl){4-5} \cmidrule(rl){6-7}
$\alpha$ & $h$ & $\tau$ & {CN-FEM} & {ETD-RDP-FEM} & {CN-FEM} & {ETD-RDP-FEM} \\
\midrule
1.2 & 1/4 & 1/4 & $8.7000 \times 10^{-3}$ & $8.4801 \times 10^{-3}$ & 1.95623 & 1.91356 \\
& 1/8 & 1/8 & $2.2000 \times 10^{-3}$ & $2.3000 \times 10^{-3}$ & 1.99015 & 2.07254 \\
& 1/16 & 1/16 & $5.5377 \times 10^{-4}$ & $5.4680 \times 10^{-4}$ & 2.08542 & 2.05254 \\
& 1/32 & 1/32 & $1.3048 \times 10^{-4}$ & $1.3181 \times 10^{-4}$ & 2.04652 & 2.03512 \\
\hline
1.4 & 1/4 & 1/4 & $7.4000 \times 10^{-4}$ & $6.1000 \times 10^{-3}$ & 2.12561 & 2.08985 \\
& 1/8 & 1/8 & $1.7000 \times 10^{-3}$ & $1.4000 \times 10^{-3}$ & 2.11021 & 2.06416 \\
& 1/16 & 1/16 & $3.9374 \times 10^{-4}$ & $8.1890 \times 10^{-5}$ & 2.08141 & 2.01553 \\
& 1/32 & 1/32 & $9.3034 \times 10^{-5}$ & $2.0253 \times 10^{-5}$ & 2.05750 & 2.01035 \\
\hline
1.6 & 1/4 & 1/4 & $6.3352 \times 10^{-4}$ & $1.2544 \times 10^{-4}$ & 2.09511 & 2.08252 \\
& 1/8 & 1/8 & $1.4828 \times 10^{-4}$ & $2.9617 \times 10^{-5}$ & 2.08856 & 2.05658 \\
& 1/16 & 1/16 & $3.4863 \times 10^{-5}$ & $7.1195 \times 10^{-6}$ & 2.06758 & 2.03521 \\
& 1/32 & 1/32 & $8.3169 \times 10^{-6}$ & $1.7370 \times 10^{-6}$ & 2.04651 & 2.01042 \\
\hline
1.8 & 1/4 & 1/4 & $1.1100 \times 10^{-4}$ & $1.6543 \times 10^{-5}$ & 2.06253 & 2.04523 \\
& 1/8 & 1/8 & $2.6573 \times 10^{-5}$ & $4.0081 \times 10^{-6}$ & 2.05568 & 2.03435 \\
& 1/16 & 1/16 & $6.3917 \times 10^{-6}$ & $9.7845 \times 10^{-7}$ & 2.03658 & 2.00525 \\
& 1/32 & 1/32 & $1.5579 \times 10^{-6}$ & $2.4372 \times 10^{-7}$ & 2.02546 & 2.00358 \\
\bottomrule
\end{tabular}\label{TableExample1}
\end{table}


\begin{figure}[H]
\centering
\includegraphics[height=6cm,width=8cm]{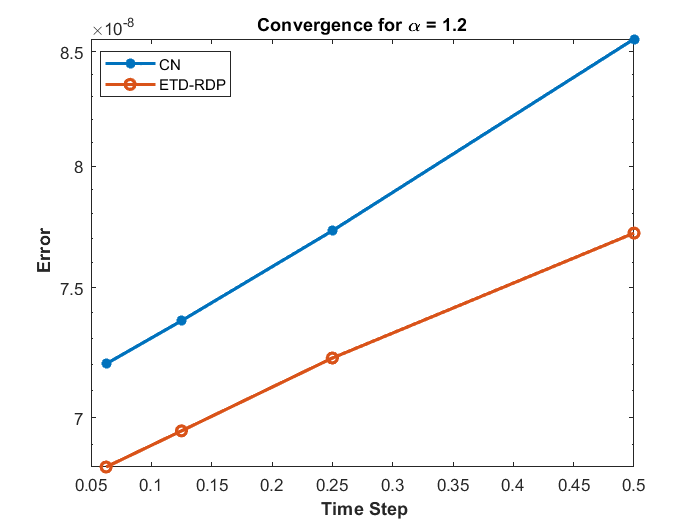}\includegraphics[height=6cm,width=8cm]{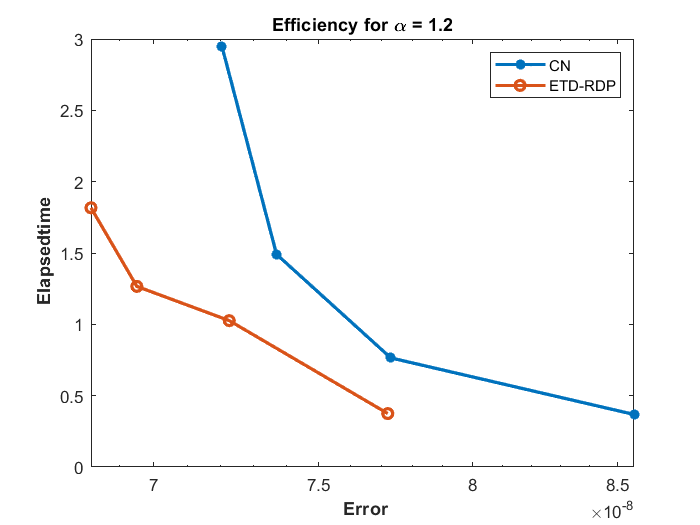}\\
\includegraphics[height=6cm,width=8cm]{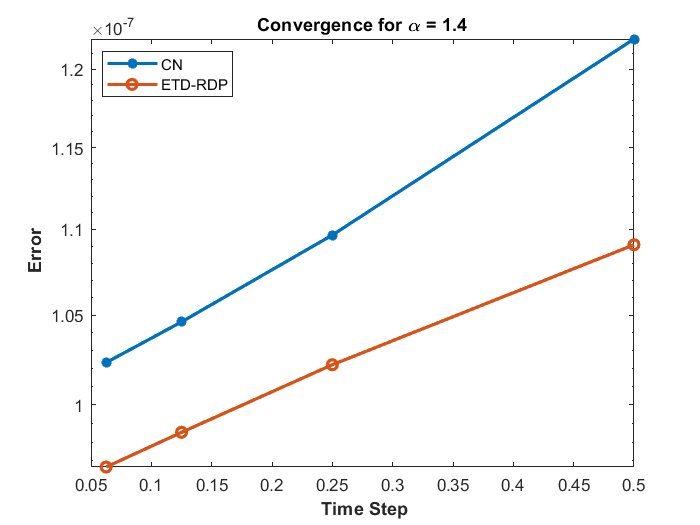}\includegraphics[height=6cm,width=8cm]{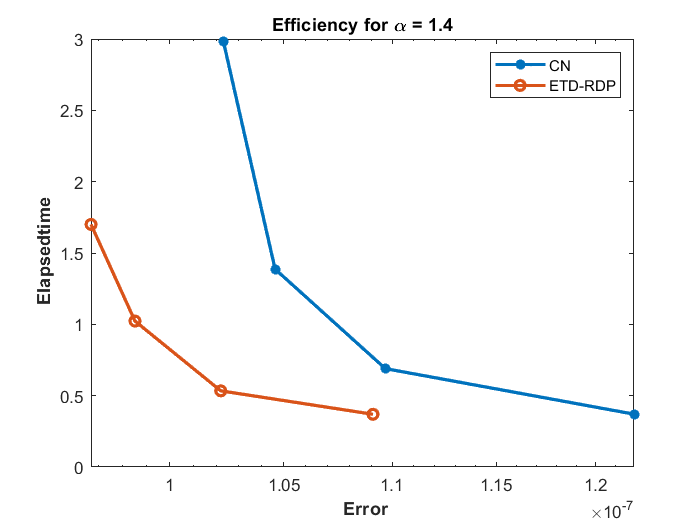}\\
\includegraphics[height=6cm,width=8cm]{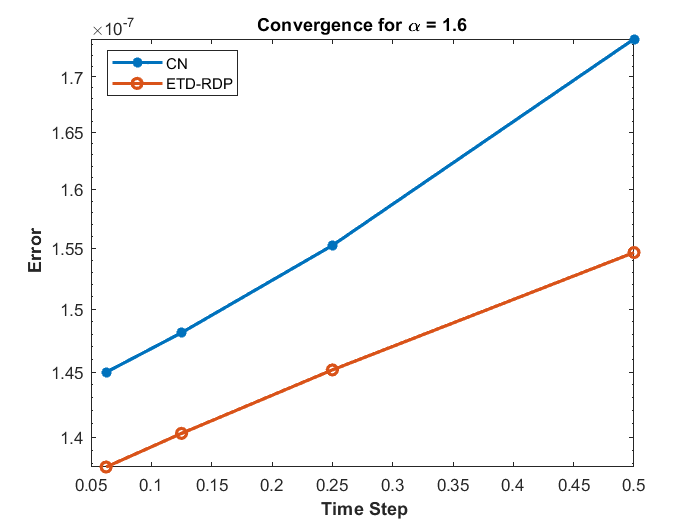}\includegraphics[height=6cm,width=8cm]{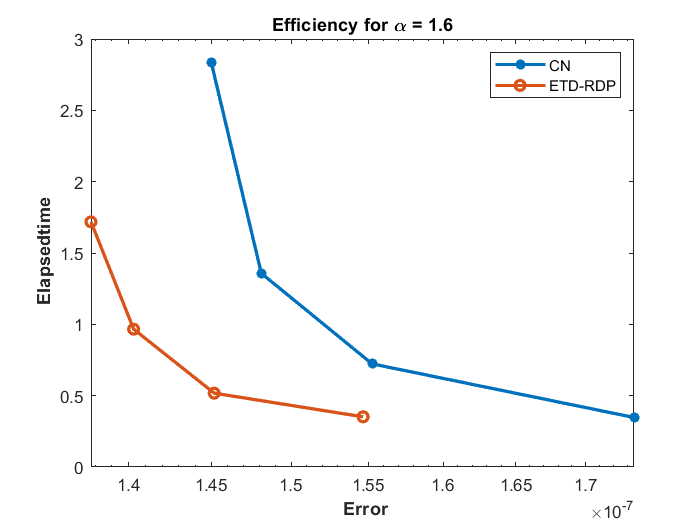}\\
\includegraphics[height=6cm,width=8cm]{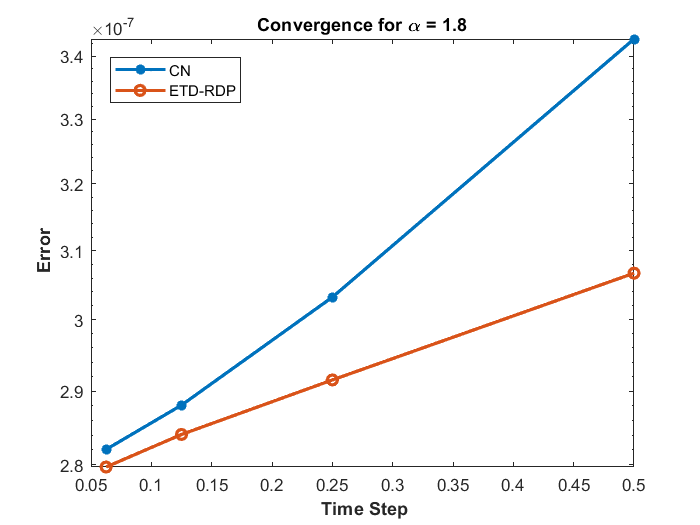}\includegraphics[height=6cm,width=8cm]{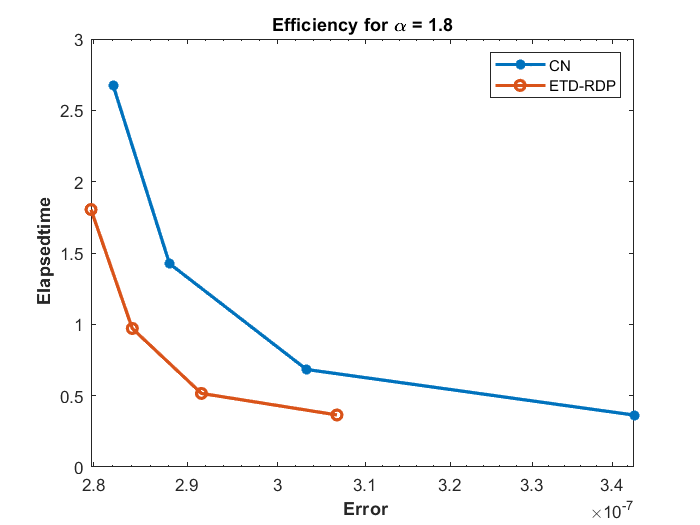}
\caption{Example \eqref{Example1}: Convergence and Efficiency Plots of ETD-RDP-FEM vs CN-FEM Schemes for Different Values of $\alpha$.}\label{FE1}
\end{figure}

\begin{figure}[H]
\centering
\includegraphics[height=6.5cm,width=8.2cm]{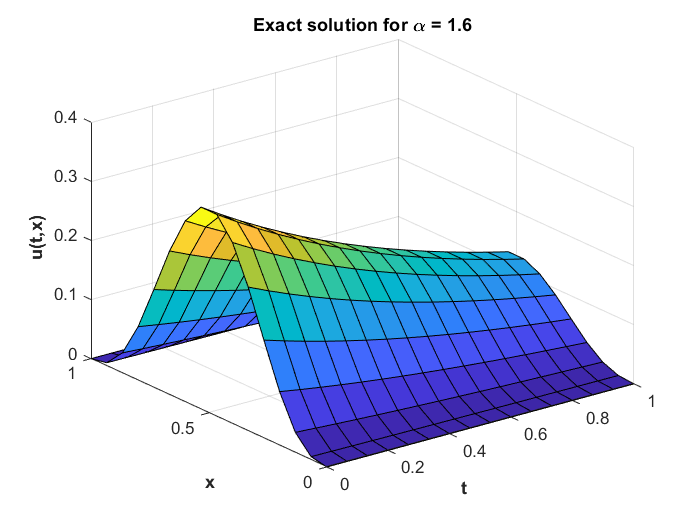}\includegraphics[height=6.5cm,width=8.2cm]{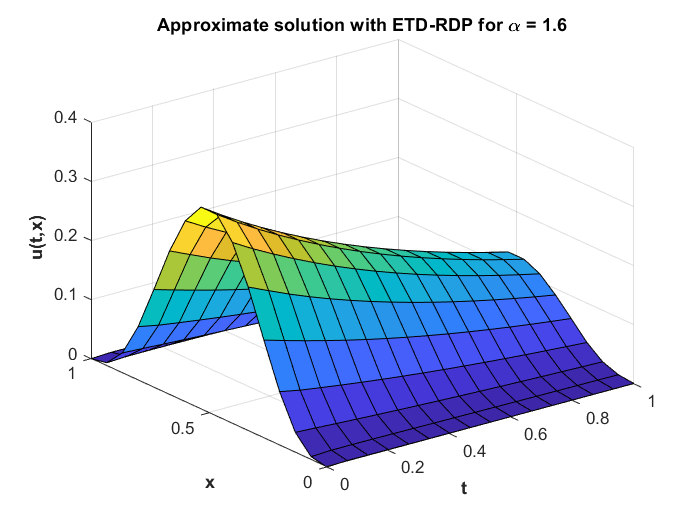}
\includegraphics[height=6.5cm,width=8.2cm]{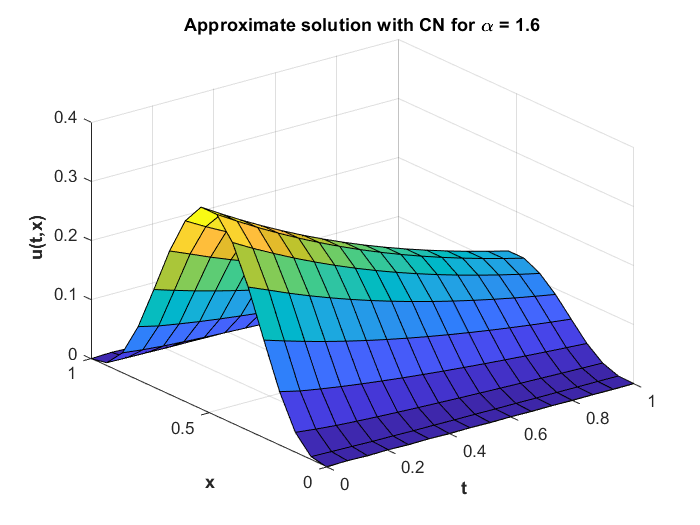}
\caption{Example \ref{Example1}: Exact Solution vs ETD-RDP-FEM vs CN-FEM Schemes for $\alpha = 1.6$.}\label{exact1}
\end{figure}
It is not surprising that both ETD-RDP-FEM and CN-FEM methods perform well in this case as this is a linear model. However, the efficiency of CN-FEM method declines as the fractional order $\alpha$ increases as can be seen in Table \ref{TableExample1} and Figures \ref{FE1} and \ref{exact1}. Overall, a second order convergence is attained by both methods as well.

\end{exm}
\begin{exm}[{\bf Nonlinear Model}]\label{Example2}$\,$\\
Here, we investigate the performance of the proposed ETD-RDP-FEM scheme for the following nonlinear Riesz-tempered fractional reaction diffusion equation:
\begin{equation}
\begin{split}
u_t(x,t)-\partial_{\mid x \mid}^{\alpha ,\lambda}u(x,t)&=f(x,t,u)\\
u(0,t)&=u(1,t)=0, \\
u(x,0)&=g(x),
\end{split}
\end{equation}
where  
$$u(x, t) = x^2(1 - x)^2e^{-t}$$
and 
\begin{eqnarray*}
f(x,t,u) &=& u^2 - x^2(1 - x)^2e^{-t} - x^4(1 - x)^4e^{-2t}\\
&& + \frac{e^{-t}}{2cos(\alpha \pi /2)}\bigg[ e^{-\lambda x} H_{k,m,\lambda}(x) +  e^{-\lambda(x+1)}\bar{H}_{k,m,\lambda}(x) - 2\lambda ^ \alpha x^2(1 - x)^2 \bigg],
\end{eqnarray*}
with
\begin{eqnarray*}
H_{k,m,\lambda}(x) &=& \sum_{k=0}^\infty \sum_{m=2}^4 \frac{\lambda^k \Gamma (k +m+ 1)A_m}{\Gamma (k + 1) \Gamma (m + 1 - \alpha))}x^{k+m},\\
\bar{H}_{k,m,\lambda}(x)&=&\sum_{k=0}^\infty \sum_{m=2}^4 \frac{\lambda^k \Gamma (k +m+ 1)A_m}{\Gamma (k + 1) \Gamma (m + 1 - \alpha))}(1-x)^{k + m -\alpha},
\end{eqnarray*}
and $A_2 = 1$, $A_3 = -2$, and $A_4 = 1$.\\
The nonlinearity stems from the polynomial $u^2$ term in the function $f$. The numerical solution from both schemes agree very well with the exact solution (see Figure~\ref{FE22NN}). The second order convergence of the ETD-RDP-FEM scheme is evident in Table~\ref{TableExample2NN} for all values of $\alpha$. The scheme is also more accurate and significantly faster than the existing CN-FEM scheme (see Figure~\ref{FE2NN}).
\noindent

\begin{table}[H]
\centering
\caption{Example \eqref{Example2}: Time rate of convergence and $L_\infty$-norm of ETD-RDP-FEM and CN-FEM with different values of $\alpha$ at fixed $t = 1$.}.
\begin{tabular}{ccccccc}
\toprule
\multicolumn{1}{c}{} & \multicolumn{1}{c}{} & \multicolumn{1}{c}{} & \multicolumn{2}{c}{\textbf{Norm}} & \multicolumn{2}{c}{\textbf{Order}} \\
\cmidrule(rl){4-5} \cmidrule(rl){6-7}
$\alpha$ & $h$ & $\tau$ & {CN-FEM} & {ETD-RDP-FEM} & {CN-FEM} & {ETD-RDP-FEM} \\
\midrule
1.2 & 1/4 & 1/4 & $7.5000 \times 10^{-3}$ & $4.5000 \times 10^{-3}$ & 2.18640 & 2.09653 \\
& 1/8 & 1/8 & $1.6000 \times 10^{-3}$ & $1.1000 \times 10^{-3}$ & 2.09254 & 2.07329 \\
& 1/16 & 1/16 & $3.7515 \times 10^{-4}$ & $2.6138 \times 10^{-4}$ & 2.08117 & 2.04752 \\
& 1/32 & 1/32 & $8.8656 \times 10^{-5}$ & $6.3228 \times 10^{-5}$ & 2.05712 & 2.02367 \\
\hline
1.4 & 1/4 & 1/4 & $5.5000 \times 10^{-3}$ & $6.1000 \times 10^{-3}$ & 2.12561 & 2.08561 \\
& 1/8 & 1/8 & $1.7000 \times 10^{-3}$ & $1.30000 \times 10^{-3}$ & 2.11021 & 2.07142 \\
& 1/16 & 1/16 & $3.9374 \times 10^{-4}$ & $3.0930 \times 10^{-4}$ & 2.08141 & 2.04653 \\
& 1/32 & 1/32 & $9.3034 \times 10^{-5}$ & $7.4871 \times 10^{-5}$ & 2.0575 & 2.02142 \\
\hline
1.6 & 1/4 & 1/4 & $3.1000 \times 10^{-3}$ & $9.0000 \times 10^{-4}$ & 2.08589 & 2.07125 \\
& 1/8 & 1/8 & $7.3021 \times 10^{-4}$ & $2.1416 \times 10^{-4}$ & 2.07661 & 2.05415 \\
& 1/16 & 1/16 & $1.7311 \times 10^{-4}$ & $5.1568 \times 10^{-5}$ & 2.06910 & 2.03536 \\
& 1/32 & 1/32 & $4.1254 \times 10^{-5}$ & $1.2580 \times 10^{-5}$ & 2.04113 & 2.01250 \\
\hline
1.8 & 1/4 & 1/4 & $5.5631 \times 10^{-4}$ & $1.6328 \times 10^{-4}$ & 2.06862 & 2.05004 \\
& 1/8 & 1/8 & $1.3262 \times 10^{-4}$ & $3.9428 \times 10^{-5}$ & 2.05108 & 2.04129 \\
& 1/16 & 1/16 & $3.2002 \times 10^{-5}$ & $9.5789 \times 10^{-6}$ & 2.05056 & 2.02258 \\
& 1/32 & 1/32 & $7.7250 \times 10^{-6}$ & $2.3575 \times 10^{-6}$ & 2.03401 & 2.01961 \\
\bottomrule
\end{tabular}\label{TableExample2NN}
\end{table}

\begin{figure}[H]
\centering
\includegraphics[height=6cm,width=8cm]{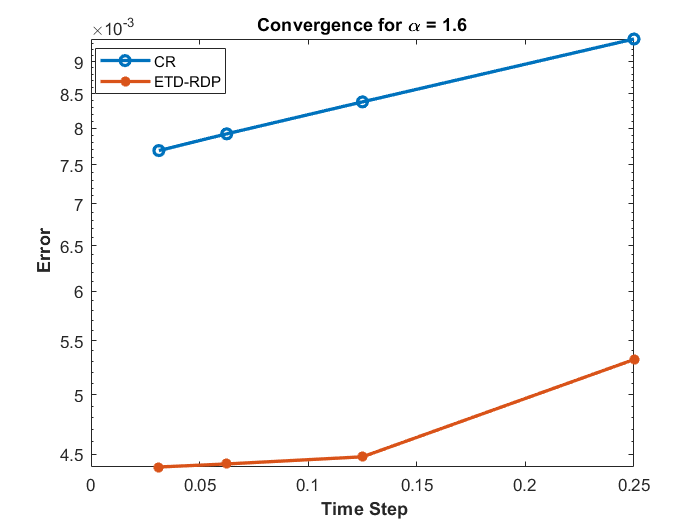}\includegraphics[height=6cm,width=8cm]{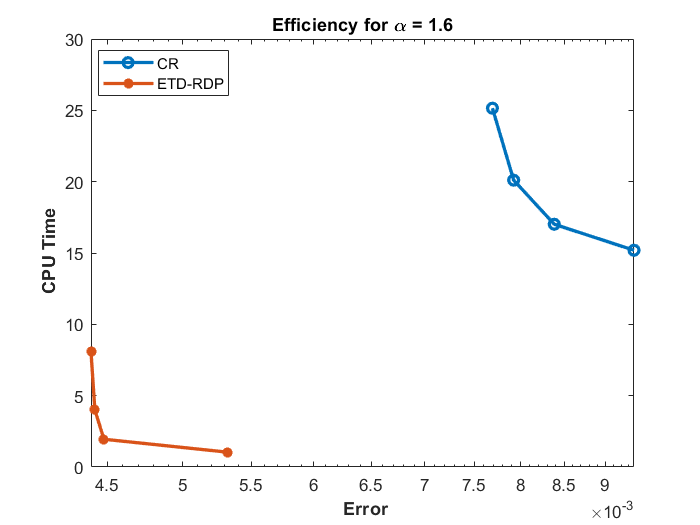}
\caption{Example \eqref{Example2}: Convergence and Efficiency Plots of ETD-RDP-FEM vs CN-FEM Schemes}
\label{FE2NN}
\end{figure}

\begin{figure}[H]
\centering
\includegraphics[height=6.5cm,width=8.2cm]{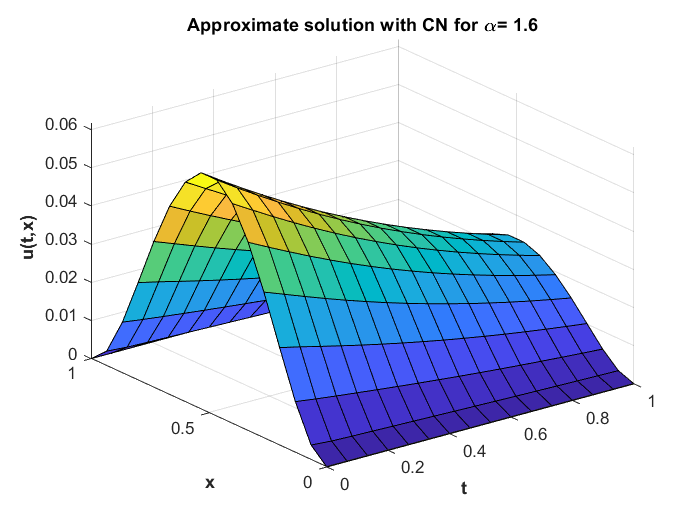}\includegraphics[height=6.5cm,width=8.2cm]{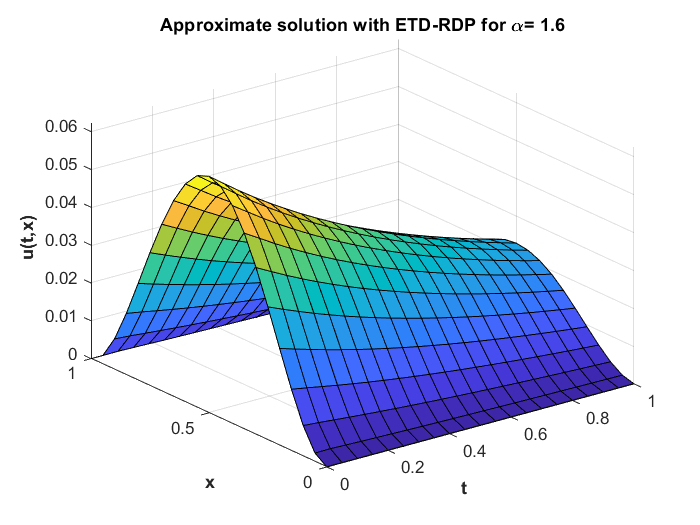}
\includegraphics[height=6.5cm,width=8.2cm]{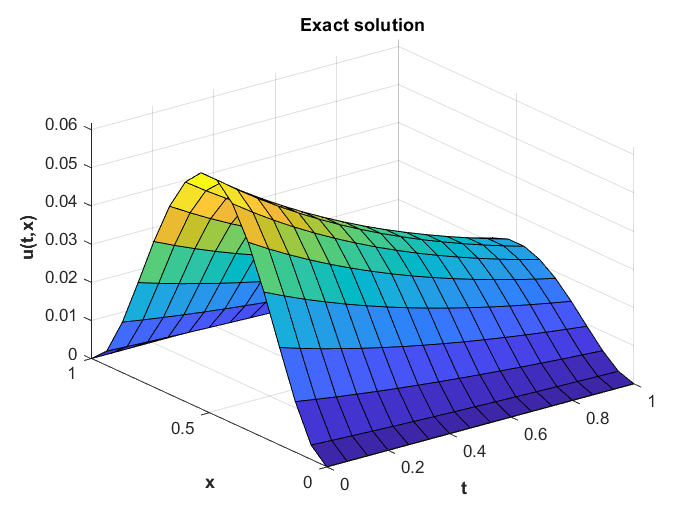}
\caption{Example \ref{Example2}: Exact Solution vs ETD-RDP-FEM vs CN-FEM Schemes}
\label{FE22NN}
\end{figure}
\end{exm}
\begin{exm}[{\bf Model with Nonsmooth Data}]\label{Example3}$\,$\\
Consider the non-smooth initial-boundary value problem in the following tempered
fractional diffusion equation:
\begin{equation}\label{meample3}
\begin{split}
u_t(x,t)-\partial_{\mid x \mid}^{\alpha ,\lambda}u(x,t)&=f(x,t),\\
u(0,t)&=u(1,t)=0, \\
\end{split}
\end{equation}
with the initial condition
\begin{equation}\label{MABC1}
u(x,0)=\left\lbrace
\begin{array}{ll}
0 \ \ \quad 0 \leq x < 0.25,\\
1 \ \ \quad 0.25 \leq x < 0.75, 
\\
0 \ \ \quad 0.75 \leq x \leq 1.
\end{array}\right.
\end{equation}
\begin{figure}[H]
\centering
\includegraphics[height=5.5cm,width=9cm]{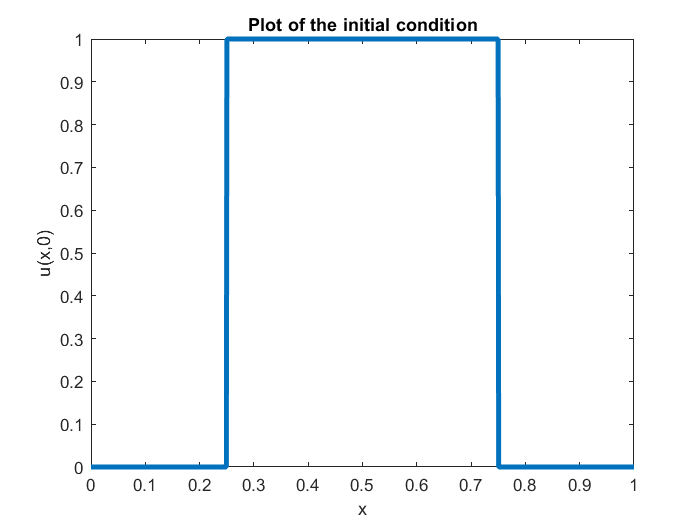}
\caption{\small Plot of the piecewise function $u(x,0)$.}
\end{figure}
The source term, $f(x,t)$ here is the same as defined in Equation \ref{source_term}. Note that Example \ref{Example3} has a nonsmooth initial data which could lead to spurious oscillation. It is worthwhile to understand how the both methods considered here handle a problem with such initial condition. We use a finer grid ($h = 1/512$) for our reference solution in this case since there is no known exact solution.
\begin{table}[H]
\centering
\caption{Comparing order of convergence and $L_\infty$ norm for ETD-RDP-FEM and CN-FEM for $h = 1/512$ for Experiment \eqref{Example3}}.
\begin{tabular}{cccccc}
\toprule
\multicolumn{1}{c}{} & \multicolumn{1}{c}{} & \multicolumn{2}{c}{\textbf{Norm}} & \multicolumn{2}{c}{\textbf{Order}} \\
\cmidrule(rl){3-4} \cmidrule(rl){5-6}
$\alpha$ & $\tau$ & {CN-FEM} & {ETD-RDP-FEM} & {CN-FEM} & {ETD-RDP-FEM} \\
\midrule
1.2 & 1/4 & 6.20970$\times 10^{-5}$ & 2.85963$\times 10^{-5}$ & 2.01558 & 1.83061 \\
& 1/8 & 1.53575$\times 10^{-5}$ & 8.0398$\times 10^{-6}$ & 2.02613 & 2.09858 \\
& 1/16 & 4.7408$\times 10^{-6}$ & 1.8772$\times 10^{-6}$ & 2.08905 & 2.01992 \\
& 1/32 & 1.1143$\times 10^{-6}$ & 5.713$\times 10^{-7}$ & 1.9765 & 2.05117 \\
\hline
1.4 & 1/4 & 1.21690$\times 10^{-4}$ & 1.15635$\times 10^{-4}$ & 1.73515 & 2.01515 \\
& 1/8 & 3.65528$\times 10^{-5}$ & 2.8607$\times 10^{-5}$ & 1.99108 & 2.20616 \\
& 1/16 & 9.1949$\times 10^{-6}$ & 7.1194$\times 10^{-6}$ & 1.89055 & 2.00655 \\
& 1/32 & 2.8074$\times 10^{-6}$ & 1.7534$\times 10^{-6}$ & 2.10271 & 2.02159 \\
\hline
1.6 & 1/4 & 8.59655$\times 10^{-4}$ & 2.15255$\times 10^{-4}$ & 1.98175 & 2.02479 \\
& 1/8 & 2.1765$\times 10^{-4}$& 5.2897$\times 10^{-5}$ & 2.08775 & 2.04210 \\
& 1/16 & 5.1202$\times 10^{-5}$ & 1.2903$\times 10^{-5}$ & 2.09112 & 2.03548 \\
& 1/32 & 1.2017$\times 10^{-5}$ & 3.2220$\times 10^{-6}$ & 2.21559 & 2.00170 \\
\hline
1.8 & 1/4 & 3.45616$\times 10^{-4}$ & 1.01124$\times 10^{-4}$ &1.99570 & 2.11327 \\
& 1/8 & 8.6662$\times 10^{-5}$ & 2.3372$\times 10^{-5}$ & 1.93860 & 2.02320 \\
& 1/16 & 2.2608$\times 10^{-5}$ & 5.7985$\times 10^{-6}$ & 2.13117 & 2.00015 \\
& 1/32 & 5.1608$\times 10^{-6}$ & 1.4495$\times 10^{-6}$ & 2.10124 & 2.00100 \\
\bottomrule
\end{tabular}\label{TableExample3}
\end{table}


\begin{figure}[H]
\centering
\includegraphics[height=7cm,width=7cm]{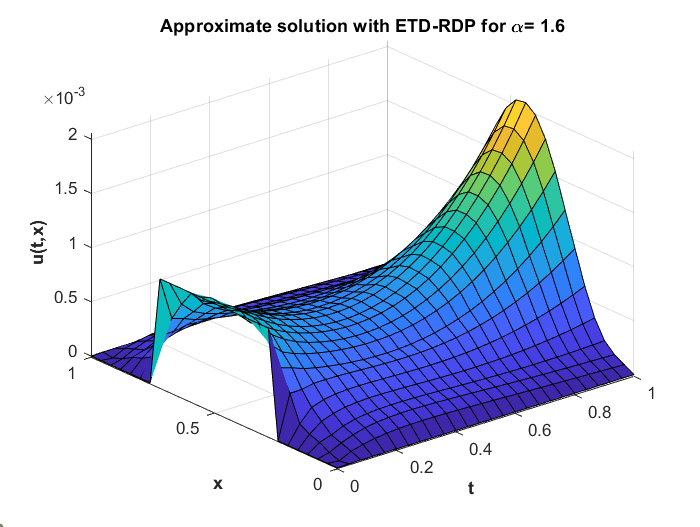}\includegraphics[height=7cm,width=7cm]{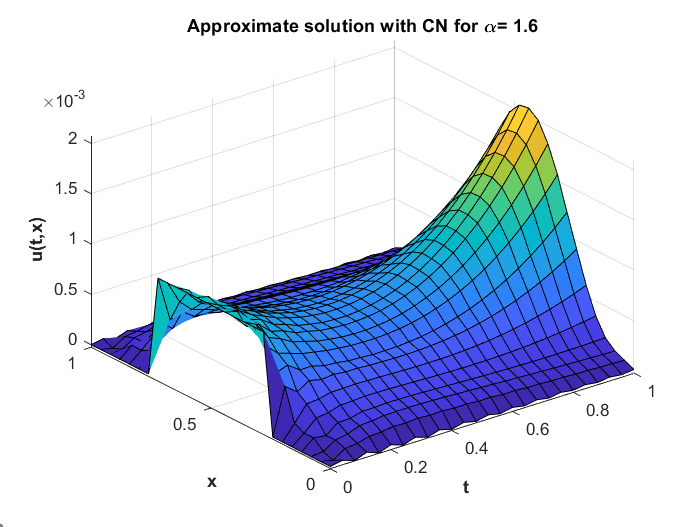}
\caption{Example \ref{Example3}: ETD-RDP-FEM vs CN-FEM Schemes}
\end{figure}

\begin{figure}[H]
\centering
\includegraphics[height=7cm,width=7cm]{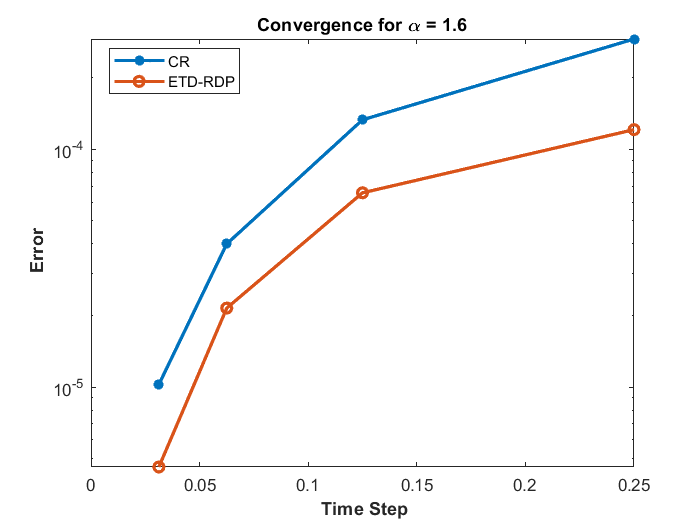}\includegraphics[height=7cm,width=7cm]{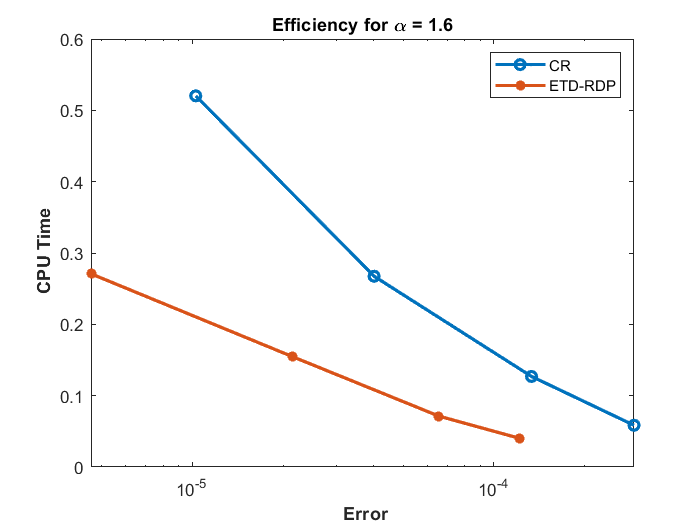}
\caption{Example \eqref{Example3}: Convergence and Efficiency Plots of ETD-RDP-FEM vs CN-FEM Schemes.}\label{FE3MM}
\end{figure}
We observe from the above results in Example \ref{Example3} also that second-order convergence is achieved by both ETD-RDP-FEM and CN-FEM, see \ref{TableExample3}. However, ETD-RDP-FEM outperformed CN-FEM across different values of $\alpha$ as well here. This observation agrees with what has been established in the literature regarding CN method.


\end{exm}
\subsection{Discussion}
The focus here is to discuss the results obtained from the above Examples \eqref{Example1}-\eqref{Example3} in the previous subsection especially the impact of the fractional order $\alpha$ on the performance of numerical schemes. In particular, we discuss the effect of $\alpha$ on CPU time and accuracy. This is very important to explore in order to gain insight into the impact of the non-integer order on the considered problems in the previous Section.
\subsubsection{Effect of Fractional Order $\alpha$ on CPU Time}
It is widely known that the efficiency of a method could be greatly affected depending on the values of fractional order $alpha$. The impact is usually noticed in the CPU time. We have performed experiments across different values of $alpha$ here to understand the effect on the efficiency of the methods of interest here.

\begin{table}[H]
\centering
\caption{Comparing Efficiency of ETD-RDP-FEM and CN-FEM. The time step $\tau = 1/32$ and mesh size $h = 1/512$}
\begin{tabular}{ccccc}
\toprule[1.5pt]
\multicolumn{2}{c}{} & \multicolumn{3}{c}{\textbf{CPU time} (in secs)} \\
\cmidrule(rl){1-5} 
$\alpha$ & Methods & Linear & Nonlinear & Nonsmooth\\
\toprule[1.5pt]
1.2  & {ETD-RDP-FEM} & 1.687328 & 8.415730 & 0.285885 \\
     & {CN-FEM}      & 2.783019 & 17.379510 & 0.506686\\
\toprule[1.5pt]
1.4  & {ETD-RDP-FEM} & 1.791548 & 8.204377 & 0.319528 \\
     & {CN-FEM}      & 2.862972 & 20.911838 & 0.532703 \\
\toprule[1.5pt]
1.6  & {ETD-RDP-FEM} & 1.703123 & 8.018364 & 0.270826 \\
     & {CN-FEM}      & 2.881956 & 24.915706 & 0.520299 \\
\toprule[1.5pt]
1.8  & {ETD-RDP-FEM} & 1.833103 & 8.377576 & 0.266322 \\
     & {CN-FEM}      & 3.047225 & 33.042088 & 0.491660\\
\toprule[1.5pt]
\end{tabular}\label{TableExample1OrderCPU}
\end{table}

\begin{figure}[H]
\minipage{0.33\textwidth}
\includegraphics[width=\linewidth]{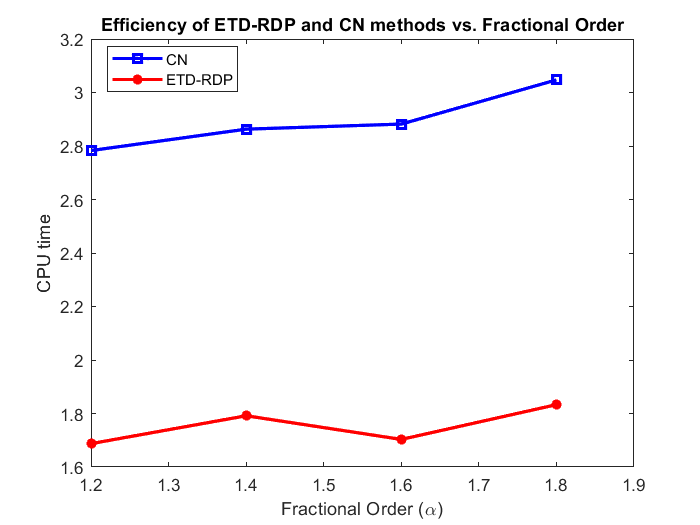}
\caption*{$(a)$ Linear Model}\label{FigureExample1OrderCPU1}
\endminipage\hfill
\minipage{0.33\textwidth}
\includegraphics[width=\linewidth]{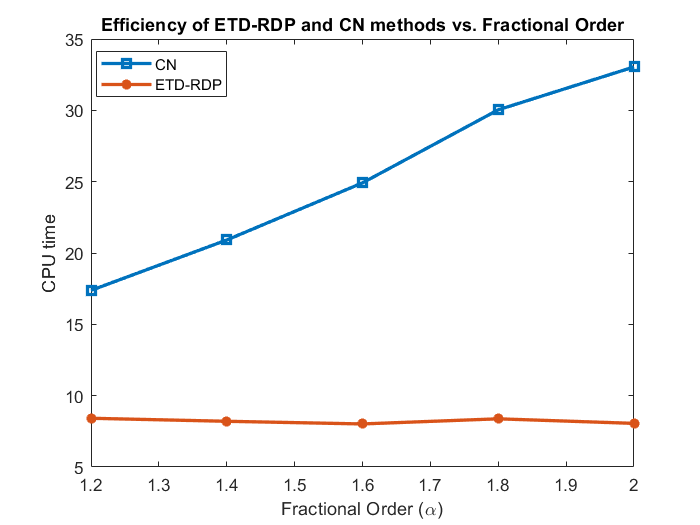}
\caption*{$(b)$ Nonlinear Model}\label{FigureExample0OrderCPUNN}
\endminipage
\hfill
\minipage{0.33\textwidth}
\includegraphics[width=\linewidth]{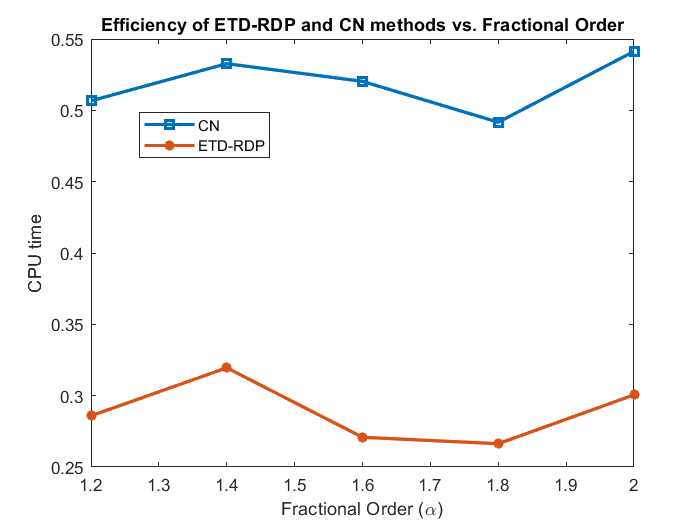}
\caption*{$(c)$ Nonsmooth Model}\label{FigureExample0OrderCPUNN}
\endminipage
\caption{Computational Efficiency Plots for Examples \eqref{Example1} - \eqref{Example3}}\label{efficiencyplots}
\end{figure}

\begin{table}[H]
\centering
\caption{Percentage Improvement of ETD-RDP-FEM over CN-FEM}
\begin{tabular}{cccc}
\toprule
\multicolumn{1}{c}{$\alpha$} & \multicolumn{1}{c}{Nonsmooth} & \multicolumn{1}{c}{Nonlinear} & \multicolumn{1}{c}{Linear} \\
\midrule
1.2 & 43.57\% & 51.57\% & 39.42\% \\
1.4 & 40.02\% & 60.78\% & 37.41\% \\
1.6 & 47.97\% & 67.81\% & 40.86\% \\
1.8 & 45.84\% & 72.10\% & 39.86\% \\
2 & 44.47\% & 75.61\% & - \\
\bottomrule
\end{tabular}\label{PercentageImprovementt}
\end{table}

\begin{figure}[H]
\centering
\includegraphics[height=8cm,width=12cm]{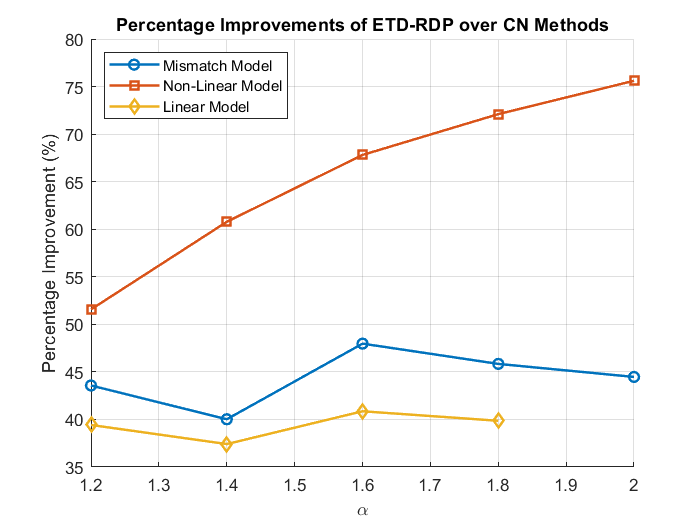}
\caption{\small Plot of the percentage improvements in CPU time of ETD-RDP-FEM over CN-FEM across different Models and fractional order $\alpha$.}\label{PercentageImprovementf}
\end{figure}

The comparison of the ETD-RDP-FEM with CN-FEM is computationally more efficient for different values of fractional orders in all the Examples considered as can be seen in Table \ref{TableExample1OrderCPU}. It is notable to see that the efficiency of both the ETD-RDP-FEM and the CN-FEM in the linear and nonsmooth models is approximately linear, see Figures \ref{efficiencyplots} $(a)$ and $(c)$. However, in the case of nonlinear model, CN-FEM is much slower than ETD-RDP-FEM as shown in Figure \ref{efficiencyplots}$(b)$. This is expected as ETD-RDP-FEM requires no nonlinear solver unlike CN-FEM. Newton method is used to handle the nonlinear term here with predefined tolerance $10^{-6}$ which is fixed across different values of $alpha$. We further demonstrate the superiority of efficiency of ETD-RDP-FEM over CN-FEM in Table \ref{PercentageImprovementt} and Figure \ref{PercentageImprovementf}. The difference is efficiency is calculate in percentage across different values of $a\alpha$. On average, ETD-RDP-FEM shows notable improvement over CN-FEM of approximately $47.97\%$, $67.81\%$, and $40.86\%$ in the linear, nonlinear and nonsmooth cases, respectively.

\subsubsection{Effect of Fractional Order $\alpha$ on Accuracy}
Our numerical experiments for all test problems show that ETD-RDP-FEM produces more accurate solutions than the existing CN-FEM scheme. Comparing the errors for each scheme across different values of the fractional order $\alpha$, we observe from Table~\ref{TableExample1OrderError} that numerical errors tends to increase as $\alpha$ approaches 1 and decreases as $\alpha$ approaches 2 for the linear and nonlinear model (Figure~\ref{accuracyplots} a,b). This observation is consistent with the theoretical results and performance of many numerical schemes for solving fractional differential equations \cite{yusuf1,wang,musti}. However, for the nonsmooth problem the error increases with increasing alpha up to $\alpha=1.6$ and then decreases. This is an interesting observation, worthy of further exploration (Figure~\ref{accuracyplots} c).
\begin{table}[H]
\centering
\caption{Comparing the accuracy of ETD-RDP-FEM and CN-FEM across different fractional order $\alpha$. The time step $\tau = 1/32$ and mesh size $h = 1/512$}
\begin{tabular}{ccccc}
\toprule[1.5pt]
\multicolumn{2}{c}{} & \multicolumn{3}{c}{\textbf{$L_{\infty}$ Error}} \\
\cmidrule(rl){1-5} 
$\alpha$ & Methods & Linear & Nonlinear & Nonsmooth\\
\toprule[1.5pt]
1.2  & {ETD-RDP-FEM} & $1.0640\times 10^{-5}$ & $6.7603\times 10^{-6}$& $5.7130\times 10^{-7}$ \\
     & {CN-FEM}      & $3.1163\times 10^{-5}$ & $4.7221\times 10^{-5}$ & $1.1143\times 10^{-6}$\\
\toprule[1.5pt]
1.4  & {ETD-RDP-FEM} & $6.5507\times 10^{-6}$ & $3.2633\times 10^{-6}$ & $1.7534\times 10^{-6}$ \\
     & {CN-FEM}      & $1.4364\times 10^{-5}$ & $4.5148\times 10^{-5}$ & $2.8074\times 10^{-6}$ \\
\toprule[1.5pt]
1.6  & {ETD-RDP-FEM} & $1.5818\times 10^{-6}$ & $1.4703\times 10^{-6}$ & $3.2200\times 10^{-6}$ \\
     & {CN-FEM}      & $2.4090\times 10^{-6}$ & $4.4593\times 10^{-6}$ & $1.2017\times 10^{-5}$ \\
\toprule[1.5pt]
1.8  & {ETD-RDP-FEM} & $1.0600\times 10^{-6}$ & $1.0243\times 10^{-6}$ & $1.4495\times 10^{-6}$ \\
     & {CN-FEM}      & $1.7028\times 10^{-6}$ & $1.3288\times 10^{-6}$ & $5.1608\times 10^{-6}$\\
\toprule[1.5pt]
\end{tabular}
\label{TableExample1OrderError}
\end{table}

\begin{figure}[H]
\minipage{0.33\textwidth}
\includegraphics[width=\linewidth]{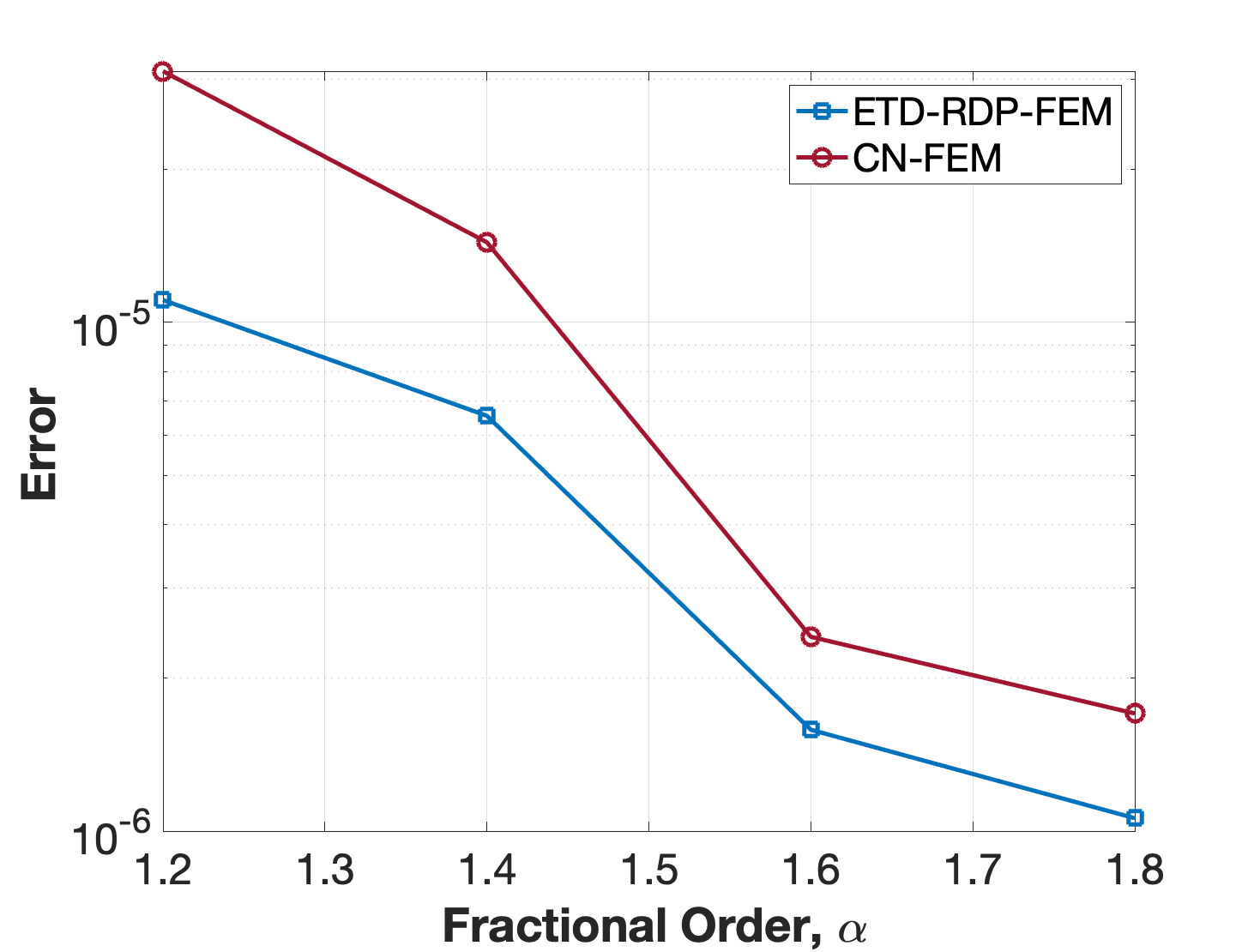}
\caption*{$(a)$ Linear Model}\label{FigureExample1OrderErr1}
\endminipage\hfill
\minipage{0.33\textwidth}
\includegraphics[width=\linewidth]{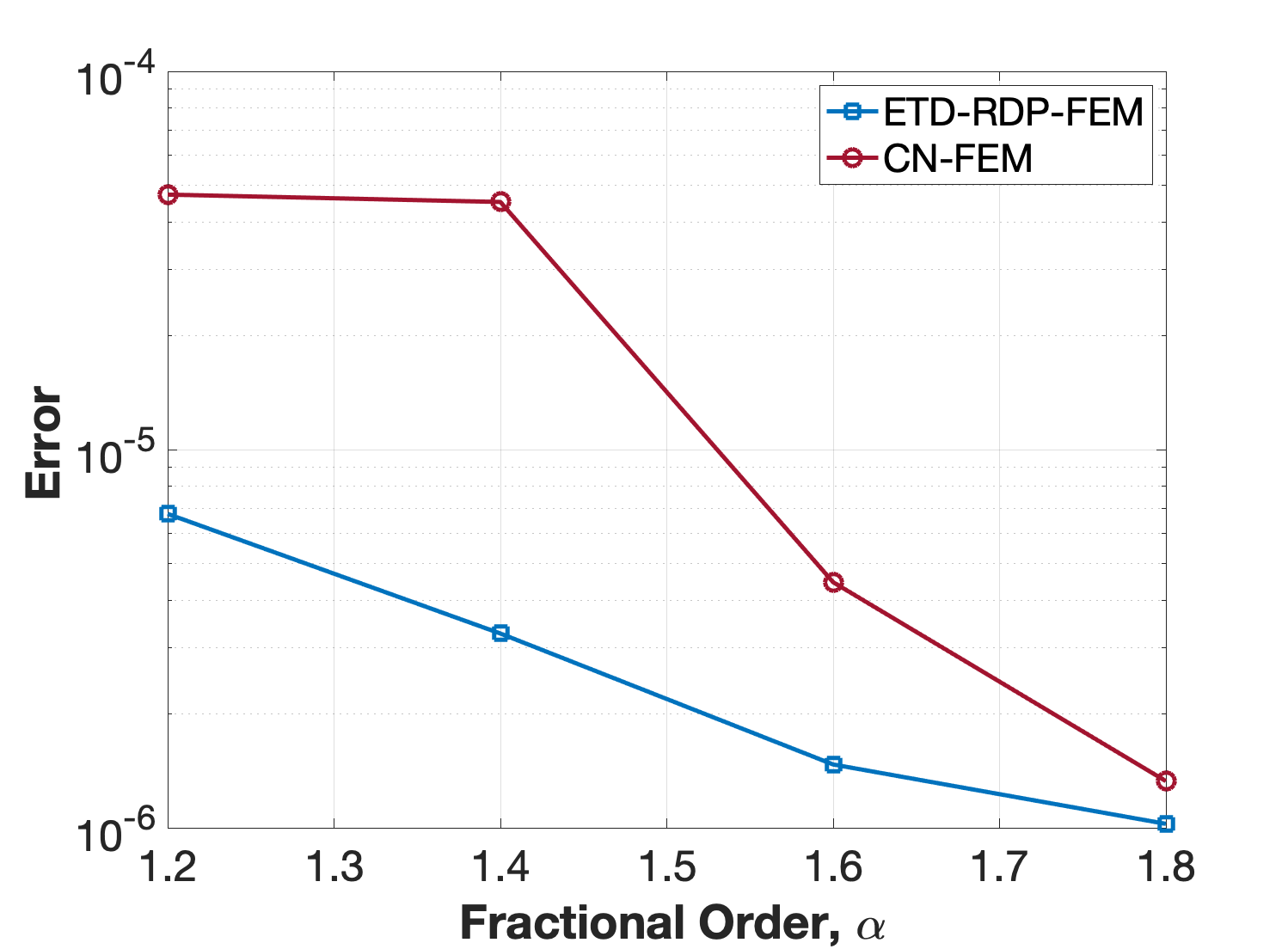}
\caption*{$(b)$ Nonlinear Model}\label{FigureExample2OrderErr1}
\endminipage
\hfill
\minipage{0.33\textwidth}
\includegraphics[width=\linewidth]{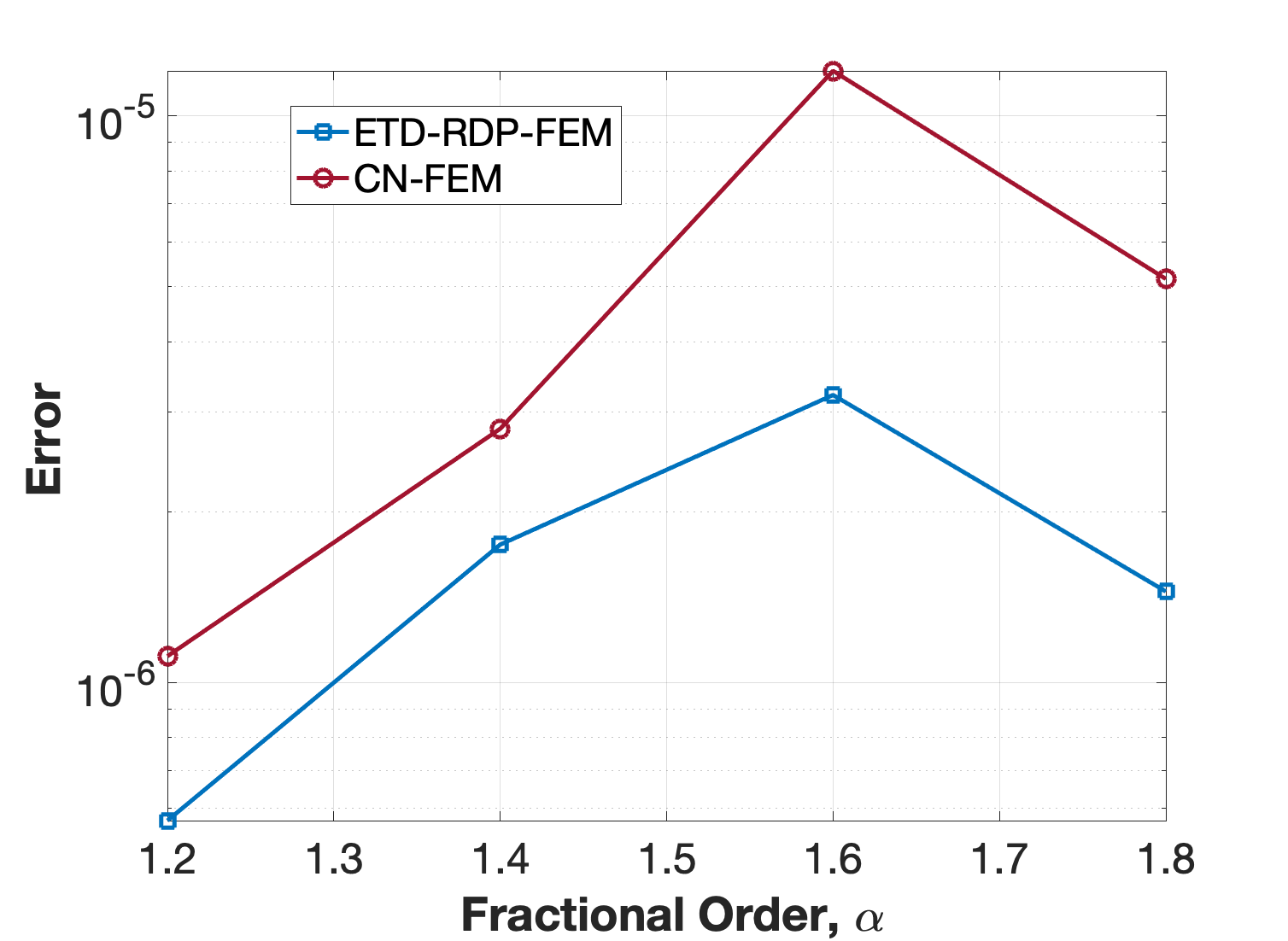}
\caption*{$(c)$ Nonsmooth Model}\label{FigureExample3OrderErr}
\endminipage
\caption{Computational accuracy Plots for Examples \eqref{Example1} - \eqref{Example3}}\label{accuracyplots}
\end{figure}
\section{Conclusion and Recommendation}\label{conclude}
\noindent
In conclusion, the Fractional Diffusion Equation (FDE) serves as a mathematical model that characterizes anomalous transport processes with non-local and long-range dependencies, diverging from traditional diffusion patterns. This makes its numerical solution challenging, as it requires complex integral operators and incurs high computational expenses for precise approximations. In this work, we introduced an Exponential Time Differencing Finite Element Method (ETD-RDP-FEM) to efficiently address both linear and nonlinear problems. Our method, which is grounded on a rational function with distinct real poles to discretize matrix exponentials, leads to an L-stable scheme. The proposed technique exhibits second-order convergence and resiliency when applied to problems with non-smooth initial conditions, underscoring its efficacy and adaptability in handling intricate scenarios. Our findings indicate that the developed approach surpasses the Crank-Nicolson technique in terms of computational efficiency, as demonstrated by reduced CPU time and increased accuracy.\\

In the future, the ETD-FEM scheme may be a promising tool for solving more complex problems, as it offers improved CPU efficiency compared to existing numerical methods. Specifically, future work could focus on extending the ETD-RDP-FEM scheme to solve fractional reaction diffusion equation with an advection term, as well as using it to solve systems of reaction-diffusion-advection equations. Additionally, the scheme's potential applicability to multidimensional problems in irregular domains should also be explored further.
\\
\\
\\

\end{document}